\numberwithin{equation}{section}
\newcommand{\define}[1]{\textbf{#1}}
\def\p#1{\mathrel{\ooalign{\hfil$\mapstochar\mkern 5mu$\hfil\cr$#1$}}}
\newcommand{\proTo}{\p\rightarrow}
\newcommand{\xproTo}[1]{\overset{#1}{\p\rightarrow}}
\newcommand{\cat}[1]{\mathsf{#1}}
\DeclareMathOperator{\src}{src}
\DeclareMathOperator{\tgt}{tgt}
\DeclareMathOperator{\id}{id}
\newcommand{\bicat}[1]{\mathbf{#1}}
\newcommand{\Cat}{\bicat{Cat}}
\newcommand{\MonCat}{\bicat{MonCat}}
\newcommand{\MonCatLax}{\MonCat_{\mathrm{lax}}}
\newcommand{\Dbl}{\bicat{Dbl}}
\newcommand{\DblLax}{\Dbl_{\mathrm{lax}}}
\newcommand{\dbl}[1]{\mathbbm{#1}}
\NewDocumentCommand{\Csp}{g}{\mathbbm{C}\mathsf{sp}\IfNoValueTF{#1}{}{(#1)}}
\newcommand{\SCsp}[2]{{}_{#1}\mathbbm{C}\mathsf{sp}(#2)}
\newcommand{\DCsp}[1]{{#1}\mathbbm{C}\mathsf{sp}}
\newcommand{\Span}[1]{\mathbbm{S}\mathsf{pan}(#1)}
\newcommand{\MonDbl}[1]{\mathbbm{B}{#1}} %
\DeclareMathOperator{\Comma}{Comma}
\DeclareMathOperator{\Apex}{Apex}
\DeclareMathOperator{\apex}{apex}
\newcommand{\leftfoot}{\operatorname{ft}_L}
\newcommand{\rightfoot}{\operatorname{ft}_R}
\declaretheorem[within=section]{theorem}
\declaretheorem[sibling=theorem]{proposition}
\declaretheorem[sibling=theorem]{corollary}
\declaretheorem[sibling=theorem]{lemma}
\declaretheorem[sibling=theorem,style=definition]{definition}
\declaretheorem[sibling=theorem,style=definition]{example}
\newcommand{\inlineCell}[9]{%
\begin{tikzcd}[ampersand replacement=\&, cramped, sep=small]
  {#1} \& {#2} \\
  {#3} \& {#4}
  \arrow[""{name=0, anchor=center, inner sep=0}, "{#5}", "\shortmid"{marking}, from=1-1, to=1-2]
  \arrow["{#7}"', from=1-1, to=2-1]
  \arrow["{#8}", from=1-2, to=2-2]
  \arrow[""{name=1, anchor=center, inner sep=0}, "{#6}"', "\shortmid"{marking}, from=2-1, to=2-2]
  \arrow["{#9}"{description}, draw=none, from=0, to=1]
\end{tikzcd}}
\title{Structured and Decorated Cospans from the Viewpoint of Double Category Theory}
\author{Evan Patterson
\institute{Topos Institute}
\email{evan@epatters.org}
}
\newcommand{\titlerunning}{Structured and Decorated Cospans via Double Category Theory}
\newcommand{\authorrunning}{Evan Patterson}
\begin{document}
\maketitle

\begin{abstract}
  Structured and decorated cospans are broadly applicable frameworks for
  building bicategories or double categories of open systems. We streamline and
  generalize these frameworks using central concepts of double category theory.
  We show that, under mild hypotheses, double categories of structured cospans
  are cocartesian (have finite double-categorical coproducts) and are
  equipments. The proofs are simple as they utilize appropriate
  double-categorical universal properties. Maps between double categories of
  structured cospans are studied from the same perspective. We then give a new
  construction of the double category of decorated cospans using the recently
  introduced double Grothendieck construction. Besides its conceptual value,
  this reconstruction leads to a natural generalization of decorated cospans,
  which we illustrate through an example motivated by statistical theories and
  other theories of processes.
\end{abstract}

\section{Introduction}
\label{sec:introduction}

A central theme of applied category theory is the mathematical modeling of open
systems: physical or computational systems that interact with each other along
boundaries or interfaces. Within this tradition, mathematical models of open
systems are most commonly based on spans or cospans, an idea now at least
twenty-five years old \cite{katis1997,rosebrugh2005}. Two general frameworks for
building open systems using cospans have emerged: structured cospans
\cite{fiadeiro2007,baez2020} and decorated cospans \cite{fong2015,baez2022}.
Complementing the mathematical theory, structured cospans have been implemented
in the programming
framework \href{https://github.com/AlgebraicJulia/Catlab.jl}{Catlab.jl} and used
to create software tools for epidemiological modeling based on open Petri nets
\cite{baez2017,libkind2022} and open stock and flow diagrams
\cite{baez2022stockflow}. Structured and decorated cospans are now essential
tools of applied and computational category theory.

The categorical description of open systems based on cospans has evolved over
time. Some early works studied categories of cospans, which compose by taking
pushouts. Because pushouts are defined only up to isomorphism, the morphisms of
these categories must be \emph{isomorphism classes} of cospans. This is
unfaithful to implementation, where one always computes with representatives of
an equivalence class, rather than the equivalence class itself. More
fundamentally, systems generally have morphisms of their own---for example,
Petri nets come with homomorphisms between them---and these are lost if open
systems are taken to be morphisms, rather than objects, of a category.

Both problems are solved by passing from categories to a two-dimensional
categorical structure, of which the best-studied are bicategories. Yet this
presents its own difficulties. In addition to composing along their boundaries,
open systems generally admit a symmetric monoidal product that juxtaposes two of
them ``in parallel.'' One then needs to construct not just a bicategory but a
symmetric monoidal bicategory of open systems. Monoidal bicategories are
inherently complicated because they are properly a three-dimensional categorical
structure (namely, tricategories with one object). It was noticed that rather
than constructing a monoidal bicategory directly, it can be easier to first
construct a monoidal \emph{double category} and then obtain the monoidal
bicategory from the globular cells of the double category
\cite{shulman2010,hansen2019}. But since a double category is at least as good
as a bicategory, one may as well consider double categories of open systems.
That is now what is typically done. In recent work, both structured cospans
\cite{baez2020} and decorated cospans \cite{baez2022} have been assembled into
symmetric monoidal double categories.

The thesis of this paper is that viewing open systems as double categories is
not merely a technical device or a means to constructing bicategories, but a
source of mathematical insights that cannot be obtained at the 1-categorical or
even bicategorical levels. To understand why, consider the philosophy behind the
modern theory of double categories, as developed principally by Grandis and
Par\'e, beginning with an account of double limits and colimits
\cite{grandis1999}, and exposited recently by Grandis \cite{grandis2019}.
Another important expression of this viewpoint is Shulman's theory of equipments
\cite{shulman2008}.

A \define{double category} is succinctly defined as a pseudocategory in
$\Cat$.\footnote{Some authors call this structure a \define{pseudo double
    category} but since all double categories in this paper are pseudo, we
  prefer to omit the adjective. Likewise, our double functors are pseudo by
  default. For complete definitions of these concepts, see \cite{grandis2019}.}
Thus, a double category $\dbl{D}$ consists of a category of objects,
$\dbl{D}_0$; a category of morphisms, $\dbl{D}_1$; source and target functors,
$\src, \tgt: \dbl{D}_1 \rightrightarrows \dbl{D}_0$; and external composition
and identity operations,
$\odot: \dbl{D}_1 \times_{\dbl{D}_0} \dbl{D}_1 \to \dbl{D}_1$ and
$\id: \dbl{D}_0 \to \dbl{D}_1$, which obey the category axioms up to coherent
globular isomorphisms in $\dbl{D}_1$. The objects of the category $\dbl{D}_0$
are called the \define{objects} of the double category $\dbl{D}$, the morphisms
of $\dbl{D}_0$ the \define{arrows} of $\dbl{D}$, the objects of $\dbl{D}_1$ the
\define{proarrows} of $\dbl{D}$, and the morphisms of $\dbl{D}_1$ the
\define{cells} of $\dbl{D}$. In particular, on this definition, the proarrows of
a double category are first and foremost the \emph{objects} of a category, which
happen to have a source and target. In important examples of double categories,
such as those of spans, cospans, relations, matrices, profunctors, and
bimodules, the proarrows are best thought of in precisely this way, as objects
that happen to have a source and target. Crucially, this also applies to double
categories of open systems, which are systems that happen to have boundaries.
Shulman calls such double categories (or rather their underlying bicategories)
``$\mathcal{M}od$-like'' after the bicategory of bimodules between rings
\cite{shulman2008}.

Whether one thinks of proarrows primarily as objects or morphisms may seem a
small matter of perspective, but it gains significance through the modern theory
of double categories, where proarrows play the role of objects in all of the
main concepts, such as natural transformations, limits and colimits, commas,
adjunctions, and the Grothendieck construction. The theory is thus well suited
to describe open systems, including those based on spans and cospans. In this
paper, we study structured and decorated cospans from the viewpoint of double
category theory.\footnote{This paper synthesizes a series of blog posts by the
  author:
  \href{https://topos.site/blog/2022/05/grothendieck-construction-for-double-categories/}%
  {``Grothendieck construction for double categories''}
  (2022),
  \href{https://topos.site/blog/2022/05/decorated-cospans-via-the-grothendieck-construction/}%
  {``Decorated cospans via the Grothendieck construction''}
  (2022),
  \href{https://topos.site/blog/2023/03/structured-cospans-as-a-cocartesian-equipment/}%
  {``Structured cospans as a cocartesian equipment''} (2023).}

After reviewing their structure as a double category, we show that structured
cospans form a cocartesian double category, a statement that is stronger yet
easier to prove than being a symmetric monoidal double category. We also show
that structured cospans are an equipment, so altogether form a cocartesian
equipment (\cref{sec:structured-cospans}). Here we see the advantages of
double-categorical universal properties. We then turn to decorated cospans
(\cref{sec:decorated-cospans}), reconstructing the double category of decorated
cospans as an application of the double Grothendieck construction
\cite{cruttwell2022}. As a byproduct, we also generalize decorated cospans in
several directions, which we illustrate through an example motivated by
categorical statistics.

\section{Structured Cospans as a Cocartesian Equipment}
\label{sec:structured-cospans}

Structured cospans represent open systems as cospans whose feet are restricted
compared with the apex \cite{fiadeiro2007,baez2020}. A simple example is open
graphs with boundaries restricted to be \emph{discrete} graphs. Compared with
other techniques, structured cospans have the advantage of being particularly
easy to use, as the hypotheses for the construction are often easy to check in
examples. However, proofs that the construction itself works are more involved
because the mathematical object being constructed---a symmetric monoidal double
category---is complicated, involving a large number of coherence conditions.
Three different correctness proofs for structured cospans have been given: the
first one by direct but lengthy verification of the axioms \cite{courser2020}
and two later ones by more conceptual routes that however import other
sophisticated concepts. These concepts are pseudocategories in the 2-category of
symmetric monoidal categories \cite{baez2020} and symmetric monoidal
bifibrations \cite{baez2022}.

Such difficulties can be bypassed by viewing structured cospans in a different
light, as forming a \emph{cocartesian} double category, even a cocartesian
equipment. Just as a cartesian or cocartesian category can be given the
structure of a symmetric monoidal category by making a choice of finite products
or coproducts, so can a cartesian or cocartesian double category be given the
structure of a symmetric monoidal double category. It is, however, much easier
to prove cocartesianness than to directly construct the symmetric monoidal
product. This circumstance highlights a recurring tension in category theory:
that between universal properties and algebraic structures. Although algebraic
structure is arguably more flexible, universal properties, when they can be
found, are extremely powerful because many consequences and coherences flow
directly from the defining existence and uniqueness statement, which is often
easy to verify in particular situations. Both cocartesian double categories and
equipments are defined by universal properties, whereas a symmetric monoidal
product is a structure on a double category.

\subsection{Double Category of Structured Cospans}

We begin by reviewing the definition of structured cospans and their structure
as a double category \cite{baez2020}.

\begin{proposition}
  Let $L: \cat{A} \to \cat{X}$ be a functor into a category $\cat{X}$ with
  pushouts. Then there is a double category $\SCsp{L}{\cat{X}}$ that has
  \begin{itemize}[noitemsep]
    \item as objects, the objects of $\cat{A}$;
    \item as arrows, the morphisms of $\cat{A}$;
    \item as proarrows $a \proTo b$, \define{$L$-structured cospans} with feet
      $a$ and $b$, which are cospans in $\cat{X}$ of the form
      $La \rightarrow x \leftarrow Lb$;
    \item as cells $\inlineCell{a}{b}{c}{d}{x}{y}{f}{g}{}$, \define{morphisms of
      $L$-structured cospans} with foot maps $f$ and $g$, which are morphisms of
      cospans in $\cat{X}$ of the form
      \begin{equation*}
        \begin{tikzcd}
          {L(a)} & x & {L(b)} \\
          {L(d)} & y & {L(c)}
          \arrow["Lf"', from=1-1, to=2-1]
          \arrow[from=1-1, to=1-2]
          \arrow[from=1-3, to=1-2]
          \arrow["h"', from=1-2, to=2-2]
          \arrow[from=2-1, to=2-2]
          \arrow["Lg", from=1-3, to=2-3]
          \arrow[from=2-3, to=2-2]
        \end{tikzcd}.
      \end{equation*}
  \end{itemize}
  Composition in the categories $\SCsp{L}{\cat{X}}_0$ and $\SCsp{L}{\cat{X}}_1$
  is by composition in $\cat{A}$ and $\Csp{\cat{X}}_1$, respectively, and
  external composition in $\SCsp{L}{\cat{X}}$ is given by that in
  $\Csp{\cat{X}}$, i.e., by pushout in $\cat{X}$.
\end{proposition}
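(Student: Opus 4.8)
The plan is to avoid checking the pseudocategory axioms by hand and instead realize $\SCsp{L}{\cat{X}}$ as a \emph{base change} of the standard double category of cospans $\Csp{\cat{X}}$ along the functor $L$. Recall that, since $\cat{X}$ has pushouts, $\Csp{\cat{X}}$ is a double category with object category $\cat{X}$; proarrow category $\Csp{\cat{X}}_1$ whose objects are cospans and whose morphisms are maps of cospans; source and target functors $\src, \tgt \colon \Csp{\cat{X}}_1 \to \cat{X}$ reading off the two feet; external identity sending $x$ to the cospan $x \to x \leftarrow x$; and external composition given by pushout. I would take this as known background and construct $\SCsp{L}{\cat{X}}$ on top of it, rather than presenting $\SCsp{L}{\cat{X}}$ as a sub-double category (which would be inadequate, since $L$ need not be injective or faithful on objects).

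First I would set $\SCsp{L}{\cat{X}}_0 := \cat{A}$ and define $\SCsp{L}{\cat{X}}_1$ as the pullback in $\Cat$ of the diagram
\[
  \cat{A} \xrightarrow{L} \cat{X} \xleftarrow{\src} \Csp{\cat{X}}_1 \xrightarrow{\tgt} \cat{X} \xleftarrow{L} \cat{A},
\]
whose objects are triples consisting of $a, b \in \cat{A}$ together with a cospan having feet $La$ and $Lb$, and whose morphisms are maps of cospans compatible with morphisms of $\cat{A}$ under $L$. The universal property of this pullback, applied to $L$ on each foot, yields precisely the proarrows and cells described in the statement. The source and target functors $\SCsp{L}{\cat{X}}_1 \rightrightarrows \cat{A}$ are the two legs of the pullback; the external identity $\cat{A} \to \SCsp{L}{\cat{X}}_1$ sends $a$ to the cospan $La \to La \leftarrow La$, whose feet lie over $(a,a)$; and the external composition is inherited from that of $\Csp{\cat{X}}$.

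The substance of the argument, and the only place any checking is required, is that these operations are well defined on the pullback and that the coherence isomorphisms of $\Csp{\cat{X}}$ transport to it. For composition, the key point is that pushing out a composable pair $La \to x \leftarrow Lb$ and $Lb \to y \leftarrow Lc$ leaves the outer feet $La$ and $Lc$ unchanged, so the composite again lies over a pair of objects of $\cat{A}$; hence $\odot$ restricts to a functor $\SCsp{L}{\cat{X}}_1 \times_{\cat{A}} \SCsp{L}{\cat{X}}_1 \to \SCsp{L}{\cat{X}}_1$. For the coherence data, the associator and unitor cells of $\Csp{\cat{X}}$ are \emph{globular}, i.e., their foot maps are identities; they therefore lie over identities of $\cat{A}$ and lift uniquely along the pullback to globular isomorphisms, automatically obeying the same coherence equations because pullbacks in $\Cat$ are computed componentwise. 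This is really an instance of a general fact---base change of a pseudocategory in $\Cat$ along any functor into its object category is again a pseudocategory---which I would isolate as a lemma so that $\SCsp{L}{\cat{X}} := L^{*}\Csp{\cat{X}}$ becomes a double category essentially by construction. I expect the only mild obstacle to be the bookkeeping confirming that the pullback's morphisms are exactly the cells with foot maps of the form $Lf$ and $Lg$, which follows directly from the universal property applied to $L$ on the two feet.
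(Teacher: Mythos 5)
Your construction is correct, and it is a genuine proof where the paper offers none: the paper explicitly takes this proposition as given, remarking only that the structure is ``inherited from that of cospans in $\cat{X}$'' and deferring to Baez--Courser, whose own argument is a direct verification of the pseudocategory axioms. Your base-change lemma --- pulling back $\Csp{\cat{X}}_1$ along $L \times L$ over $\src \times \tgt$ and observing that external composition, identities, and the globular coherence isomorphisms all restrict because $\odot$ commutes strictly with source and target and the associators/unitors lie over identities --- is exactly the rigorous content of the paper's one-line remark, and it correctly handles the case where $L$ is not injective or faithful by making $f$ and $g$ part of the data of a cell rather than carving out a subcategory. What this buys over the cited direct verification is that all coherence equations are checked componentwise in the pullback, so nothing beyond the well-definedness of $\odot$ on the fiber needs to be verified by hand; the isolated lemma (base change of a pseudocategory in $\Cat$ along a functor into its object category) is also reusable. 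One small caveat: the lifts of the associator and unitors along the pullback are canonical (the component in $\cat{A} \times \cat{A}$ is taken to be an identity) but not literally unique unless $L$ is faithful, so you should say ``lift canonically'' rather than ``lift uniquely''; naturality and coherence of the chosen lifts then follow componentwise as you say.
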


We take this result as given. The proof is straightforward because the double
category structure of $L$-structured cospans is inherited from that of cospans
in $\cat{X}$. For details, see \cite[Theorem 2.3]{baez2020}.

\subsection{Cocartesian Equipment of Structured Cospans}

We now prove that structured cospans form an equipment, then a cocartesian
double category, and hence a cocartesian equipment. The reader may find it
helpful to review the definitions of cocartesian double categories and
equipments in \cref{app:cocartesian-equipments}.

\begin{proposition}
  Let $L: \cat{A} \to \cat{X}$ be a functor into a category $\cat{X}$ with
  pushouts. Then the double category of $L$-structured cospans is an equipment.
\end{proposition}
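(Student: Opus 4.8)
The plan is to reduce the statement to the already-understood case of ordinary cospans, using Shulman's characterization of equipments: a double category $\dbl{D}$ is an equipment precisely when its source–target functor $(\src,\tgt)\colon \dbl{D}_1 \to \dbl{D}_0 \times \dbl{D}_0$ is a fibration \cite{shulman2008}. By the preceding proposition $\SCsp{L}{\cat{X}}$ is already a double category, so it suffices to verify this single fibration condition, and I would obtain it by base change from $\Csp{\cat{X}}$.

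First I would recall that the double category $\Csp{\cat{X}}$ of cospans is itself an equipment. The relevant functor is $(\src,\tgt)\colon \Csp{\cat{X}}_1 \to \cat{X}\times\cat{X}$, sending a cospan to its pair of feet, and its cartesian lifts are completely explicit and, notably, require no pushouts: given a cospan $x' \rightarrow m \leftarrow y'$ and a map $(u,v)\colon (x,y)\to(x',y')$ in $\cat{X}\times\cat{X}$, the cartesian lift is obtained simply by precomposing the two legs with $u$ and $v$, producing the cospan $x \rightarrow m \leftarrow y$ together with the cospan morphism whose apex component is $\id_m$. Cartesianness is immediate because that apex component is an identity, so any competing cospan morphism factors through it in a unique way.

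The key step is then to recognize $\SCsp{L}{\cat{X}}$ as a base change of $\Csp{\cat{X}}$ along $L$. Unwinding the definitions from the preceding proposition, a proarrow of $\SCsp{L}{\cat{X}}$ is exactly a cospan in $\cat{X}$ together with a choice of objects $a,b$ of $\cat{A}$ whose images $La, Lb$ are its feet, and a cell is exactly a cospan morphism whose two foot components are of the form $Lf$ and $Lg$ for morphisms $f,g$ of $\cat{A}$. This yields a pullback of categories $\SCsp{L}{\cat{X}}_1 \cong \Csp{\cat{X}}_1 \times_{\cat{X}\times\cat{X}} (\cat{A}\times\cat{A})$, formed along $L\times L$, under which the source–target functor of $\SCsp{L}{\cat{X}}$ is precisely the projection to $\cat{A}\times\cat{A}$. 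Since fibrations are stable under pullback and $(\src,\tgt)\colon\Csp{\cat{X}}_1\to\cat{X}\times\cat{X}$ is a fibration by the previous step, I conclude that $(\src,\tgt)\colon\SCsp{L}{\cat{X}}_1\to\cat{A}\times\cat{A}$ is a fibration, hence that $\SCsp{L}{\cat{X}}$ is an equipment. Concretely, the companion and conjoint of an arrow $f\colon a\to b$ come out as those of $Lf$, namely the structured cospans $La\xrightarrow{Lf} Lb \xleftarrow{\id} Lb$ and $Lb \xrightarrow{\id} Lb \xleftarrow{Lf} La$.

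I do not anticipate a serious obstacle here. The one point requiring genuine care is the bookkeeping in the pullback identification: one must check that cells of $\SCsp{L}{\cat{X}}$ correspond \emph{exactly} to pairs consisting of a cospan morphism and an $\cat{A}\times\cat{A}$-morphism that agree under $L\times L$, so that the square is a genuine pullback rather than a mere comparison functor. Once that is established, stability of fibrations under base change does all the remaining work and no coherence conditions need be checked by hand. Should the appendix's definition of equipment be phrased via companions and conjoints rather than fibrations, an equally short alternative is to verify the companion/conjoint identities directly for the two structured cospans displayed above; these verifications are routine precisely because every binding cell involved has an identity apex component.
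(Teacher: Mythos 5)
Your proof is correct, and it rests on the same essential observation as the paper's: restriction of an $L$-structured cospan along $f$ and $g$ is restriction of the underlying cospan in $\Csp{\cat{X}}$ along $Lf$ and $Lg$, i.e., precomposition of the legs, which needs no pushouts. The difference is in how the universal property is discharged. The paper exhibits the restriction cell explicitly and verifies the factorization property by hand, displaying the two pasted diagrams and noting that the required factorization is a special case of the one in $\Csp{\cat{X}}$. You instead identify $\SCsp{L}{\cat{X}}_1$ as the strict pullback $\Csp{\cat{X}}_1 \times_{\cat{X}\times\cat{X}} (\cat{A}\times\cat{A})$ along $L \times L$, observe that $\langle\src,\tgt\rangle$ for structured cospans is the projection to $\cat{A}\times\cat{A}$, and invoke stability of Grothendieck fibrations under pullback. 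This is a sound identification (the paper's definition does require the feet to be literally of the form $La$, $Lb$ and the foot maps to be literally of the form $Lf$, $Lg$, so the square is a genuine strict pullback, as you flag), and it matches the fibrational definition of equipment given in the paper's appendix. What your packaging buys is that no diagram chase is needed at all, and the cartesian lifts downstairs are automatically the pairs (cartesian lift of $(Lf,Lg)$, the map $(f,g)$), which reproduces the paper's restriction cell exactly; what the paper's more concrete route buys is an explicit display of the restriction cells that is reused later (e.g., in the cocartesianness proof, where coproducts of structured cospans are built by restricting along the inverse comparison maps). Your closing remark about companions and conjoints is a correct consequence but not needed, since the fibration condition alone suffices.
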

\begin{proof}
  To restrict an $L$-structured cospan $(c, Lc \rightarrow y \leftarrow Ld, d)$
  along arrows ${f: a \to c}$ and ${g: b \to d}$ in $\cat{A}$, simply restrict
  the underlying cospan in $\cat{X}$ along $Lf$ and $Lg$, using the fact that
  $\Csp{\cat{X}}$ is an equipment (\cref{ex:cospan-dbl-cat}). The universal
  property holds as a special case of the universal property in $\Csp{\cat{X}}$:
  \begin{equation*}
    \begin{tikzcd}
      {La'} & x & {Lb'} \\
      La && Lb \\
      Lc & y & Ld
      \arrow["\ell"', from=3-1, to=3-2]
      \arrow["r", from=3-3, to=3-2]
      \arrow["{L g'}", from=1-3, to=2-3]
      \arrow["{L f'}"', from=1-1, to=2-1]
      \arrow[from=1-1, to=1-2]
      \arrow[from=1-3, to=1-2]
      \arrow["h"', from=1-2, to=3-2]
      \arrow["Lf"', from=2-1, to=3-1]
      \arrow["Lg", from=2-3, to=3-3]
    \end{tikzcd}
    \qquad=\qquad
    \begin{tikzcd}
      {L a'} & x & {L b'} \\
      La & y & Lb \\
      Lc & y & Ld
      \arrow["Lf"', from=2-1, to=3-1]
      \arrow["{L f'}"', from=1-1, to=2-1]
      \arrow["{L g'}", from=1-3, to=2-3]
      \arrow["Lg", from=2-3, to=3-3]
      \arrow["\ell"', from=3-1, to=3-2]
      \arrow["r", from=3-3, to=3-2]
      \arrow[Rightarrow, no head, from=2-2, to=3-2]
      \arrow["{\ell \circ Lf}"', from=2-1, to=2-2]
      \arrow["{r \circ Lg}", from=2-3, to=2-2]
      \arrow["h"', dashed, from=1-2, to=2-2]
      \arrow[from=1-1, to=1-2]
      \arrow[from=1-3, to=1-2]
    \end{tikzcd}
    \qedhere
  \end{equation*}
\end{proof}

For the double category of $L$-structured cospans to be cocartesian, extra
assumptions are needed. Clearly, the category $\cat{A}$ must itself have finite
coproducts. Also, these must preserved by the functor $L: \cat{A} \to \cat{X}$.
The latter is often is easy to verify in examples by exhibiting $L$ as a left
adjoint.

\begin{theorem} \label{thm:structured-csp-cocartesian}
  Suppose $\cat{A}$ is a category with finite coproducts, $\cat{X}$ is a
  category with finite colimits, and $L: \cat{A} \to \cat{X}$ is a functor that
  preserves finite coproducts. Then the double category of $L$-structured
  cospans is cocartesian, hence also a cocartesian equipment.
\end{theorem}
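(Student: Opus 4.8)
Using the definition of cocartesian double category recalled in \cref{app:cocartesian-equipments}, the plan is to construct a left adjoint to the diagonal double functor $\SCsp{L}{\cat{X}} \to \SCsp{L}{\cat{X}} \times \SCsp{L}{\cat{X}}$, together with the corresponding left adjoint over the terminal double category that supplies the initial object. I would establish this through the levelwise criterion that the two constituent categories $\SCsp{L}{\cat{X}}_0$ and $\SCsp{L}{\cat{X}}_1$ have finite coproducts preserved by the source and target functors; once compatibility with external composition is checked, this reduces the entire double-categorical universal property to ordinary $1$-categorical coproducts in $\cat{A}$ and $\cat{X}$.

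At the object level there is nothing to prove, since $\SCsp{L}{\cat{X}}_0 = \cat{A}$ has finite coproducts by hypothesis. At the proarrow level I would present $\SCsp{L}{\cat{X}}_1$ as the pullback of $(\leftfoot, \rightfoot) \colon \Csp{\cat{X}}_1 \to \cat{X} \times \cat{X}$ along $L \times L \colon \cat{A} \times \cat{A} \to \cat{X} \times \cat{X}$, so that a proarrow consists of a pair of feet in $\cat{A}$ together with a cospan in $\cat{X}$ on their images. Given structured cospans $M \colon a \proTo b$ and $N \colon a' \proTo b'$ with apexes $x$ and $y$, their coproduct is the structured cospan $a + a' \proTo b + b'$ with apex $x + y$, whose legs are the coproducts of the legs of $M$ and $N$ precomposed with the comparison isomorphisms $L(a + a') \cong La + La'$ and $L(b + b') \cong Lb + Lb'$ afforded by the hypothesis that $L$ preserves finite coproducts. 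The universal property of this proarrow follows at once from those of the coproducts $x + y$ in $\cat{X}$ and $a + a'$, $b + b'$ in $\cat{A}$, and by construction $\src$ and $\tgt$ return $a + a'$ and $b + b'$, so they preserve coproducts. The nullary case is the same, the initial object being $0 \in \cat{A}$ together with the structured cospan with apex $0$ and feet $L0$, using that $L$ preserves the initial object.

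To promote the coproduct operation to a double functor, I would check compatibility with external composition and units by appealing to the fact that colimits commute with colimits in $\cat{X}$. Since external composition is computed by pushout, the coproduct of a pair of external composites agrees canonically with the external composite of the corresponding coproducts, and the coproduct of identity proarrows is again an identity. This commuting-of-colimits observation is the conceptual core of the argument and replaces any manual verification of associativity or interchange coherences.

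The step I expect to be the main obstacle is the bookkeeping with the comparison isomorphisms $L(a + a') \cong La + La'$: because $L$ preserves coproducts only up to isomorphism, the coproduct of proarrows does not lie on the nose in the strict pullback description of $\SCsp{L}{\cat{X}}_1$, so one must verify that the chosen isomorphisms cohere with composition and with the universal property. Keeping the argument at the level of universal properties, rather than explicit formulas, is exactly what keeps this in check. Finally, combining cocartesianness with the preceding proposition, which shows that $\SCsp{L}{\cat{X}}$ is an equipment, yields the concluding claim that it is a cocartesian equipment.
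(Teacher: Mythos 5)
Your proposal is correct and follows essentially the same route as the paper: verify finite coproducts in both underlying categories (using the comparison isomorphisms $L(a)+L(a')\cong L(a+a')$ to transport the pointwise coproduct of cospans in $\cat{X}$ to a structured cospan with feet $a+a'$ and $b+b'$), note that source and target preserve them, and obtain the interchange comparison cells as isomorphisms because they coincide with those of $\Csp{\cat{X}}$, where colimits commute with colimits. The paper likewise works with the concrete levelwise definition of cocartesianness rather than directly exhibiting a left adjoint to the diagonal, and handles the coherence of the comparison isomorphisms exactly as you anticipate, by restriction along $L_{a,a'}^{-1}$ and appeal to universal properties.
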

\begin{proof}
  Because the categories $\cat{A}$ and $\cat{X}$ have finite coproducts, there
  are canonical comparison maps
  \begin{equation*}
    L_{a,a'} \coloneqq [L(\iota_a), L(\iota_{a'})]: L(a) + L(a') \to L(a+a'),
    \qquad a, a' \in \cat{A},
  \end{equation*}
  and $L_0 \coloneqq {!_{L(0)}}: 0_{\cat{X}} \to L(0_{\cat{A}})$. For any maps
  $f:a \to c$ and $f': a' \to c$ in $\cat{A}$, the comparisons satisfy
  \begin{equation*}
    \begin{tikzcd}[row sep=small, column sep=small]
      {L(a)+L(a')} && {L(a+a')} \\
      & {L(c)}
      \arrow["{[Lf, Lf']}"', from=1-1, to=2-2]
      \arrow["{L([f,f'])}", from=1-3, to=2-2]
      \arrow["{L_{a,a'}}", from=1-1, to=1-3]
    \end{tikzcd}
  \end{equation*}
  as shown by precomposing both sides with the coprojections $\iota_{La}$ and
  $\iota_{La'}$ to obtain $Lf$ and $Lf'$, respectively. Since by assumption $L$
  preserves finite coproducts, the comparisons $L_{a,a'}$ and $L_0$ are, in
  fact, isomorphisms.

  We now prove that the categories underlying $\SCsp{L}{\cat{X}}$ are
  cocartesian. By assumption, the category $\SCsp{L}{\cat{X}}_0 = \cat{A}$ has
  finite coproducts. Since the comparison $L_0$ is an isomorphism,
  $L(0_{\cat{A}})$ is initial in $\cat{X}$ and the initial $L$-structured cospan
  is $(0_{\cat{A}}, \id_{L(0_{\cat{A}})}, 0_{\cat{A}})$. Furthermore, the
  coproduct of two $L$-structured cospans
  ${(a, La \rightarrow x \leftarrow Lb, b)}$ and
  ${(a', La' \rightarrow x' \leftarrow Lb', b')}$, denoted
  \begin{equation*}
    (a+a',\ L(a+a') \rightarrow x+x' \leftarrow L(b+b'),\ b+b'),
  \end{equation*}
  is obtained from the pointwise coproduct of cospans in $\cat{X}$ by
  restriction along the inverse comparisons $L_{a,a'}^{-1}$ and $L_{b,b'}^{-1}$.
  The universal property of coproducts in $\SCsp{L}{\cat{X}}_1$ then takes the
  form:
  \begin{equation*}
    \begin{tikzcd}
      {L(a+a')} & {L(a) +L(a')} & {x+x'} & {L(b)+L(b')} & {L(b+b')} \\
      {L(c)} & {L(c)} & y & {L(d)} & {L(d)}
      \arrow[from=1-2, to=1-3]
      \arrow["{L_{a,a'}^{-1}}", from=1-1, to=1-2]
      \arrow[from=1-4, to=1-3]
      \arrow["{L_{b,b'}^{-1}}"', from=1-5, to=1-4]
      \arrow["{L([f,f'])}"', from=1-1, to=2-1]
      \arrow[Rightarrow, no head, from=2-1, to=2-2]
      \arrow["{[h,h']}"', from=1-3, to=2-3]
      \arrow["{[Lf,Lf']}"', from=1-2, to=2-2]
      \arrow[from=2-2, to=2-3]
      \arrow[from=2-4, to=2-3]
      \arrow["{[Lg,Lg']}", from=1-4, to=2-4]
      \arrow[Rightarrow, no head, from=2-5, to=2-4]
      \arrow["{L([g,g'])}", from=1-5, to=2-5]
    \end{tikzcd}.
  \end{equation*}
  We have shown that both categories underlying $\SCsp{L}{\cat{X}}$ have finite
  coproducts, and it is immediate that the source and target functors preserve
  them.

  Finally, the comparison cells in $\SCsp{L}{\cat{X}}$ interchanging finite
  coproducts with external composition and identity
  (\cref{def:cocartesian-dbl-cat}) are all isomorphisms because they are defined
  by the same maps in $\cat{X}$ as the comparison cells in $\Csp{\cat{X}}$,
  which we already know to be isomorphisms (\cref{ex:cospan-dbl-cat}).
\end{proof}

As a corollary, every double category of $L$-structured cospans satisfying the
hypotheses of the theorem can be given the structure of a symmetric monoidal
double category, by making choices of coproducts in both underlying categories.
This follows abstractly because any cocartesian object in a 2-category with
finite 2-products is a symmetric pseudomonoid in a canonical way \cite[Remark
2.11]{shulman2010}. Cocartesian double categories are cocartesian objects in the
2-category $\Dbl$, whereas symmetric monoidal double categories are symmetric
pseudomonoids in $\Dbl$.

\subsection{Maps Between Structured Cospan Double Categories}

We complete the essential theory of structured cospans by showing how to
construct maps between cocartesian equipments of structured cospans. These maps
are cocartesian double functors (\cref{def:cocartesian-dbl-functor}). Compared
with the original results \cite[Theorems 4.2 and 4.3]{baez2020}, the theorem
below is slightly more general, treating the lax case as well as the pseudo one,
and slightly stronger, yielding cocartesian double functors instead of symmetric
monoidal ones.

\begin{theorem} \label{thm:maps-structured-csp}
  Suppose we have a diagram in $\Cat$ of the form
  \begin{equation*}
    \begin{tikzcd}
      {\cat{A}} & {\cat{X}} \\
      {\cat{A}'} & {\cat{X}'}
      \arrow["L", from=1-1, to=1-2]
      \arrow["{L'}"', from=2-1, to=2-2]
      \arrow["{F_1}", from=1-2, to=2-2]
      \arrow["{F_0}"', from=1-1, to=2-1]
      \arrow["\alpha", shorten <=4pt, shorten >=4pt, Rightarrow, from=2-1, to=1-2]
    \end{tikzcd},
  \end{equation*}
  where the categories $\cat{X}$ and $\cat{X}'$ have pushouts. Then there is a
  lax double functor $\dbl{F}: \SCsp{L}{\cat{X}} \to \SCsp{L'}{\cat{X}'}$ that
  has underlying functor $\dbl{F}_0 = F_0$ and acts on proarrows as
  \begin{equation*}
    (a, L(a) \xrightarrow{\ell} x \xleftarrow{r} L(b), b)
    \quad\mapsto\quad
    (F_0(a), L'(F_0(a)) \xrightarrow{\alpha_a} F_1(L(a)) \xrightarrow{F_1(\ell)}
    F_1(x) \xleftarrow{F_1(r)} F_1(L(b)) \xleftarrow{\alpha_b} L'(F_0(b)), F_0(b))
  \end{equation*}
  and on cells as
  \begin{equation*}
    \begin{tikzcd}
      {L(a)} & x & {L(b)} \\
      {L(a')} & {x'} & {L(b')}
      \arrow["\ell", from=1-1, to=1-2]
      \arrow["r"', from=1-3, to=1-2]
      \arrow["Lf"', from=1-1, to=2-1]
      \arrow["{\ell'}"', from=2-1, to=2-2]
      \arrow["{r'}", from=2-3, to=2-2]
      \arrow["Lg", from=1-3, to=2-3]
      \arrow["h"', from=1-2, to=2-2]
    \end{tikzcd}
    \quad\mapsto\quad
    \begin{tikzcd}
      {L'(F_0(a))} & {F_1(x)} & {L'(F_0(b))} \\
      {L'(F_0(a'))} & {F_1(x')} & {L'(F_0(b'))}
      \arrow["{F_1(\ell)\circ \alpha_a}", from=1-1, to=1-2]
      \arrow["{F_1(r) \circ \alpha_b}"', from=1-3, to=1-2]
      \arrow["{L'(F_0(f))}"', from=1-1, to=2-1]
      \arrow["{F_1(\ell') \circ \alpha_{a'}}"', from=2-1, to=2-2]
      \arrow["{F_1(r') \circ \alpha_{b'}}", from=2-3, to=2-2]
      \arrow["{L'(F_0(g))}", from=1-3, to=2-3]
      \arrow["{F_1(h)}"', from=1-2, to=2-2]
    \end{tikzcd}.
  \end{equation*}
  Moreover, $\dbl{F}$ is a pseudo double functor whenever $F_1$ preserves
  pushouts and $\alpha$ is a natural isomorphism.

  Suppose further that all of the categories in question have finite coproducts
  and that $L$ and $L'$ preserve them, so that both double categories
  $\SCsp{L}{\cat{X}}$ and $\SCsp{L'}{\cat{X}'}$ are cocartesian. Then the lax
  double functor $\dbl{F}$ is cocartesian if and only if both functors $F_0$ and
  $F_1$ are cocartesian. In particular, $\dbl{F}$ is a cocartesian pseudo double
  functor whenever $F_0$ preserves finite coproducts, $F_1$ preserves finite
  colimits, and $\alpha$ is a natural isomorphism.
\end{theorem}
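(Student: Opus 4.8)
The plan is to construct $\dbl{F}$ and verify its properties in stages, following the stratification of double category theory into the two underlying categories together with the external structure. The statement has three parts: existence of the lax double functor, the conditions under which it is pseudo, and the conditions under which it is cocartesian. I would treat these in order, leaning heavily on the corresponding results for plain cospans, since $L$-structured cospans inherit everything from $\Csp{\cat{X}}$.

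For the first part, I would define $\dbl{F}_0 = F_0$ on the object category $\cat{A}$, which is legitimate since $\SCsp{L}{\cat{X}}_0 = \cat{A}$ and $\SCsp{L'}{\cat{X}'}_0 = \cat{A}'$. On proarrows and cells, $\dbl{F}_1$ is the composite of applying $F_1$ to the apex and legs of the structured cospan and then precomposing the legs with the comparison maps $\alpha_a$ and $\alpha_b$ supplied by the natural transformation $\alpha$; the formulas in the statement make this explicit. The content is to exhibit the laxators: the external composition comparison $\dbl{F}(M) \odot \dbl{F}(N) \to \dbl{F}(M \odot N)$ and the external identity comparison $\id_{F_0(a)} \to \dbl{F}(\id_a)$. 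External composition of structured cospans is pushout in $\cat{X}$, so I would build the composition laxator from the canonical comparison map out of the pushout of $F_1$-images into the $F_1$-image of the pushout (which exists by the universal property of the pushout, regardless of whether $F_1$ preserves it); the naturality squares for $\alpha$ guarantee that the legs match up. Checking the lax double functor coherence axioms (associativity and unitality of the laxators) is routine diagram-chasing in $\cat{X}'$, so I would assert these rather than grind them out.

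For the pseudo claim, the composition laxator is the comparison map out of a pushout, which is an isomorphism exactly when $F_1$ preserves that pushout; the identity laxator involves $\alpha_a$, which is invertible exactly when $\alpha$ is a natural isomorphism. Hence $\dbl{F}$ is pseudo precisely under the two stated hypotheses. For the cocartesian claim, I would invoke the definition of a cocartesian double functor (\cref{def:cocartesian-dbl-functor}), which amounts to requiring that $\dbl{F}$ preserve double-categorical coproducts, equivalently that the comparison cells built from coproduct data be isomorphisms. Since coproducts in $\SCsp{L}{\cat{X}}$ are computed on objects via coproducts in $\cat{A}$ and on proarrows via coproducts in $\cat{X}$ (as established in the proof of \cref{thm:structured-csp-cocartesian}), preservation of these double coproducts decomposes into $F_0$ preserving coproducts in $\cat{A}$ and $F_1$ preserving coproducts in $\cat{X}$; this gives the ``if and only if.'' The final sentence then follows because $F_1$ preserving finite colimits in particular preserves both pushouts and coproducts, $F_0$ preserving finite coproducts gives cocartesianness of $\dbl{F}_0$, and $\alpha$ being a natural isomorphism upgrades $\dbl{F}$ to pseudo.

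The main obstacle I anticipate is bookkeeping the interaction between the comparison maps $\alpha_a, \alpha_b$ and the pushout defining external composition when verifying the laxator coherences, since one must track how the two structured-cospan legs are glued and confirm that the $\alpha$-naturality squares render the resulting comparison compatible with associativity. This is mechanical but error-prone, so I would organize it by pasting diagrams rather than elementwise computation, and where possible reduce directly to the already-known coherences for the lax double functor on plain cospans induced by $(F_1, \text{the canonical pushout comparisons})$, treating the structuring by $L$ and $L'$ as decoration that the $\alpha$-components carry along.
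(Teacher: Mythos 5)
Your construction is correct and would go through, but it takes a genuinely different route from the paper. You build $\dbl{F}$ in one piece and propose to verify the lax functor coherences for the combined laxator directly (or by partial reduction to the plain-cospan case), whereas the paper factors $\dbl{F}$ as a composite of three simpler lax double functors, each handled by its own lemma: first $\SCsp{L}{F_1}: \SCsp{L}{\cat{X}} \to \SCsp{F_1 \circ L}{\cat{X}'}$, a normal lax double functor that postcomposes with $F_1$ and whose coherences are inherited verbatim from $\Csp{F_1}: \Csp{\cat{X}} \to \Csp{\cat{X}'}$ (\cref{ex:maps-cospan-dbl-cat}); then $\alpha^*: \SCsp{F_1 \circ L}{\cat{X}'} \to \SCsp{L' \circ F_0}{\cat{X}'}$, which restricts cospans along the components of $\alpha$ and carries the only genuinely new laxator (a pushout comparison induced by $\alpha_b$); and finally the strict reindexing $\SCsp{F_0}{\cat{X}'}: \SCsp{L' \circ F_0}{\cat{X}'} \to \SCsp{L'}{\cat{X}'}$. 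This factorization is exactly what dissolves the ``bookkeeping'' obstacle you flag at the end: the interaction between the $\alpha$-components and the pushout comparison never has to be checked jointly, because the two effects live in different factors, and pseudo-ness and cocartesianness can be read off factor by factor ($F_1$ preserving pushouts makes the first factor pseudo, $\alpha$ invertible makes the second pseudo, the third is always strict; cocartesianness of $\alpha^*$ is automatic because natural transformations commute with the coproduct comparison maps). One small imprecision in your pseudo-ness discussion: the composition laxator is not solely the pushout comparison for $F_1$; it also involves changing the base of the pushout from $L'(F_0(b))$ to $F_1(L(b))$ along $\alpha_b$, so invertibility of $\alpha$ is needed for the composition laxator as well as the unitor. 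This does not affect the stated (sufficient) conclusion, but your ``exactly when'' attribution of each hypothesis to a single laxator is not quite right, and the modular proof makes the correct attribution transparent.
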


As a substantial application of the theorem, we have formulated the generalized
Lokta-Volterra model as a cocartesian lax double functor from open signed graphs
to open parameterized dynamical systems \cite{aduddell2023}.

We prove the theorem by decomposing the lax double functor $\dbl{F}$ into three
simpler ones. Taken together, the lemmas also implicitly give formulas for the
laxators and unitors of $\dbl{F}$, which we omitted in the theorem statement.

\begin{lemma}
  Let $\cat{X}$ be a category with pushouts and let
  $\cat{A}_0 \xrightarrow{F_0} \cat{A} \xrightarrow{L} \cat{X}$ be functors.
  Then there is a \emph{strict} double functor
  $\SCsp{F_0}{\cat{X}}: \SCsp{L \circ F_0}{\cat{X}} \to \SCsp{L}{\cat{X}}$ given
  by $F_0$ on objects and arrows and by the identity on the cospans and maps of
  cospans underlying proarrows and cells.

  Furthermore, the double functor $\SCsp{F_0}{\cat{X}}$ is cocartesian whenever
  $\cat{A}_0$, $\cat{A}$, and $\cat{X}$ have finite coproducts and the functors
  $L$ and $F_0$ preserve them.
\end{lemma}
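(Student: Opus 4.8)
The plan is to verify the lemma in two parts, mirroring its two-sentence structure. First I would construct the strict double functor $\SCsp{F_0}{\cat{X}}$ and check that it is well-defined; then I would show it is cocartesian under the stated coproduct-preservation hypotheses.

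For the first part, the key observation is that an $(L \circ F_0)$-structured cospan with feet $a_0, b_0 \in \cat{A}_0$ is literally a cospan $L(F_0(a_0)) \to x \leftarrow L(F_0(b_0))$ in $\cat{X}$, which is exactly an $L$-structured cospan with feet $F_0(a_0), F_0(b_0) \in \cat{A}$. So I would define $\SCsp{F_0}{\cat{X}}$ to act by $F_0$ on objects and arrows, and to leave the underlying cospan and the map of cospans literally unchanged on proarrows and cells. The verification that this is a strict double functor is then almost purely formal: on the object category it is the functor $F_0: \cat{A}_0 \to \cat{A}$; on the proarrow category it is the identity on the subcategory $\Csp{\cat{X}}_1$ of cospans in $\cat{X}$; compatibility with source, target, external composition (pushout), and identities all hold strictly because external composition and identities in $\SCsp{L \circ F_0}{\cat{X}}$ and $\SCsp{L}{\cat{X}}$ are both computed identically in $\Csp{\cat{X}}$, and the foot maps transform by $F_0$ in a way that respects composition in $\cat{A}_0$ and $\cat{A}$. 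I expect this part to be routine, requiring only that one check the source, target, and composition squares literally coincide.

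For the second part, I would invoke the criterion for a double functor to be cocartesian (\cref{def:cocartesian-dbl-functor}): both underlying functors must preserve finite coproducts, and the comparison cells relating coproducts with external composition and identity must be sent to isomorphisms. On objects, the underlying functor is $F_0: \cat{A}_0 \to \cat{A}$, which preserves finite coproducts by hypothesis. On proarrows, the underlying functor is the identity on $\Csp{\cat{X}}_1$, so it trivially preserves coproducts of structured cospans. The point worth checking is that the coproduct of $(L\circ F_0)$-structured cospans, formed as in \cref{thm:structured-csp-cocartesian} using the inverse comparison maps $(L\circ F_0)_{a_0,a_0'}^{-1}$, is carried to the coproduct of the corresponding $L$-structured cospans, which is formed using $L_{F_0 a_0, F_0 a_0'}^{-1}$; here one uses that $F_0$ preserving coproducts makes the canonical comparison for $L \circ F_0$ factor through that for $L$, so the underlying cospan in $\cat{X}$ agrees on both sides.

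The main obstacle, and the only genuinely nontrivial point, is precisely this compatibility of comparison maps: one must confirm that the coproduct comparison $(L\circ F_0)_{a_0,a_0'}: (LF_0)(a_0) + (LF_0)(a_0') \to (LF_0)(a_0 + a_0')$ coincides, up to the coherence isomorphism $F_0(a_0 + a_0') \cong F_0 a_0 + F_0 a_0'$ coming from $F_0$ preserving coproducts, with the comparison $L_{F_0 a_0, F_0 a_0'}$. This is a diagram chase using the defining formula $[L(\iota), L(\iota')]$ for the comparisons together with functoriality of $L$ and $F_0$ on coprojections; once it is established, the coproduct structured cospans on both sides are defined by the same maps in $\cat{X}$, so the comparison cells agree and are isomorphisms for exactly the same reason as in \cref{thm:structured-csp-cocartesian}. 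I would then note that the initial-object comparison is handled identically, completing the proof that $\SCsp{F_0}{\cat{X}}$ is cocartesian.
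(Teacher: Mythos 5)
Your proof is correct and supplies exactly the verification that the paper leaves implicit (its stated proof is simply ``immediate from the definitions''), and you rightly isolate the one point worth writing down: the compatibility $(L\circ F_0)_{a_0,a_0'} = L\bigl([F_0(\iota_{a_0}),F_0(\iota_{a_0'})]\bigr)\circ L_{F_0 a_0,\,F_0 a_0'}$, which ensures the coproduct of $(L\circ F_0)$-structured cospans and the coproduct of their images have the same underlying cospan in $\cat{X}$. One cosmetic remark: \cref{def:cocartesian-dbl-functor} asks only that both underlying functors preserve finite coproducts, so the extra clause you invoke about comparison cells being sent to isomorphisms is not part of the criterion (and is automatic in any case).
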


The proof is immediate from the definitions. The next lemma is slightly more
involved.

\begin{lemma}
  Let $\cat{X}$ and $\cat{X}'$ be categories with pushouts and let
  $\cat{A} \xrightarrow{L} \cat{X} \xrightarrow{F_1} \cat{X}'$ be functors. Then
  there is a \emph{normal} lax double functor
  $\SCsp{L}{F_1}: \SCsp{L}{\cat{X}} \to \SCsp{F_1 \circ L}{\cat{X}'}$ that is
  the identity on objects and arrows and acts on proarrows and cells by
  postcomposing the underlying diagrams in $\cat{X}$ with
  $F_1: \cat{X} \to \cat{X}'$.

  The laxators are given by the universal property of pushouts in $\cat{X}'$,
  and $\SCsp{L}{F_1}$ is pseudo if and only if $F_1$ preserves pushouts.
  Furthermore, when $\cat{A}$, $\cat{X}$, and $\cat{X}'$ have finite coproducts
  and $L$ preserves them, $\SCsp{L}{F_1}$ is cocartesian if and only if $F_1$ is
  cocartesian.
\end{lemma}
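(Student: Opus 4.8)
The plan is to build the double functor by hand, check the lax-functor axioms through the universal property of pushouts, and then reduce each characterization (normal, pseudo, cocartesian) to a property of $F_1$ alone. On objects and arrows I would take $\dbl{F}_0 = \id_{\cat{A}}$, which is legitimate since $\SCsp{L}{\cat{X}}_0 = \cat{A} = \SCsp{F_1 \circ L}{\cat{X}'}_0$. On a proarrow $La \xrightarrow{\ell} x \xleftarrow{r} Lb$ I postcompose with $F_1$ to get $F_1 La \xrightarrow{F_1\ell} F_1 x \xleftarrow{F_1 r} F_1 Lb$, a genuine $(F_1\circ L)$-structured cospan because its feet are $F_1 L a$ and $F_1 L b$; on cells I postcompose the entire morphism-of-cospans square with $F_1$. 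Functoriality of $\dbl{F}_1$ is immediate from that of $F_1$ (morphisms of cospans compose via their apex components), and compatibility with source and target is clear. Since $F_1(\id) = \id$, the identity proarrow $La \xrightarrow{\id} La \xleftarrow{\id} La$ is sent on the nose to the identity proarrow on $a$; hence the unitor is an identity and $\SCsp{L}{F_1}$ is \emph{normal}.

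For the laxator I would compare the two ways of composing a composable pair. Composing first and then applying $F_1$ yields the apex $F_1(x +_{Lb} y)$, whereas applying $F_1$ first and then composing in $\SCsp{F_1 \circ L}{\cat{X}'}$ yields the apex $F_1 x +_{F_1 Lb} F_1 y$. The images of the pushout coprojections, $F_1 x \to F_1(x +_{Lb} y)$ and $F_1 y \to F_1(x +_{Lb} y)$, agree on $F_1 Lb$, so the universal property of the pushout in $\cat{X}'$ induces the canonical comparison $F_1 x +_{F_1 Lb} F_1 y \to F_1(x +_{Lb} y)$; this is the laxator, and it is a well-defined cell because it commutes with the identity feet. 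Associativity and unit coherence then follow from the uniqueness clause of the universal property, exactly as for iterated pushouts. The laxator at a given composable pair is invertible precisely when $F_1$ preserves that pushout, so $\SCsp{L}{F_1}$ is pseudo if and only if $F_1$ preserves the pushouts arising in composition---those of spans $x \leftarrow Lb \to y$ with vertex in the image of $L$---and in particular whenever $F_1$ preserves all pushouts.

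For the cocartesian claim, assume $\cat{A}$, $\cat{X}$, $\cat{X}'$ have finite coproducts and $L$ preserves them, so that $\SCsp{L}{\cat{X}}$ is cocartesian by \cref{thm:structured-csp-cocartesian}. The object functor $\dbl{F}_0 = \id_{\cat{A}}$ trivially preserves finite coproducts, so by \cref{def:cocartesian-dbl-functor} all the content lies in $\dbl{F}_1$. Recalling from the proof of \cref{thm:structured-csp-cocartesian} that the coproduct of two structured cospans has as apex the coproduct $x + x'$ of their apices, the binary comparison cell $\dbl{F}_1(P) + \dbl{F}_1(Q) \to \dbl{F}_1(P + Q)$ is the cell over identity feet given by the canonical map $F_1 x + F_1 x' \to F_1(x + x')$, and the nullary comparison is given by $F_1$ applied to the initial object. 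In the direction $(\Leftarrow)$, if $F_1$ is cocartesian then $F_1 \circ L$ preserves finite coproducts, so the target $\SCsp{F_1 \circ L}{\cat{X}'}$ is itself cocartesian by \cref{thm:structured-csp-cocartesian} and every one of these comparison maps is an isomorphism; hence $\SCsp{L}{F_1}$ is cocartesian.

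The converse $(\Rightarrow)$ is where I expect the main obstacle, since it requires showing that invertibility of the comparison cells genuinely forces $F_1$ to preserve coproducts, rather than merely coproducts of objects in some restricted class. The key observation is that every object of $\cat{X}$ arises as an apex: because $L$ preserves the empty coproduct, $L(0_{\cat{A}}) \cong 0_{\cat{X}}$, and for any $x \in \cat{X}$ the span $L(0_{\cat{A}}) \to x \leftarrow L(0_{\cat{A}})$ is a structured cospan with apex $x$. Hence if $\SCsp{L}{F_1}$ is cocartesian, the comparison $F_1 x + F_1 x' \to F_1(x + x')$ is an isomorphism for arbitrary $x, x'$, and similarly for the initial object, so $F_1$ preserves finite coproducts and is cocartesian. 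This same ``enough diagrams arise'' principle underlies the converse for pseudoness; the only delicacy there is that composition detects pushouts only along maps out of objects in the image of $L$, whereas the coproduct argument detects every apex outright. Everything remaining is routine verification from the universal properties.
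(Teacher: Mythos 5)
Your proof is correct, but it takes a more hands-on route than the paper, which disposes of the lemma in one line: the proposed functor acts on the underlying cospans and cospan maps exactly as the lax double functor $\Csp{F_1}: \Csp{\cat{X}} \to \Csp{\cat{X}'}$ of \cref{ex:maps-cospan-dbl-cat} does, so normality, the laxator formula, and the pseudo/cocartesian characterizations are simply inherited from that example. Your direct verification rebuilds the same constructions (laxator from the pushout comparison $F_1 x +_{F_1 Lb} F_1 y \to F_1(x+_{Lb} y)$, normality from $F_1$ preserving identities, comparison cells with apex map $F_1 x + F_1 x' \to F_1(x+x')$), so mathematically you end up in the same place. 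What your approach buys is that it surfaces two points the ``carries over directly'' argument glosses over. First, for the converse of the cocartesian claim you correctly note that one must check that \emph{every} pair of objects of $\cat{X}$ occurs as apexes of structured cospans, and you supply this via the cospans $L(0_{\cat{A}}) \to x \leftarrow L(0_{\cat{A}})$, which is exactly where the hypothesis that $L$ preserves finite coproducts (so $L(0_{\cat{A}}) \cong 0_{\cat{X}}$) is used; in the unstructured cospan example this issue is invisible because every object is already a foot. Second, you rightly flag that the ``only if'' of the pseudo claim is delicate: external composition only exercises pushouts of spans $x \leftarrow Lb \to y$ whose middle object lies in the image of $L$, so invertibility of all laxators literally forces $F_1$ to preserve only those pushouts, not all of them, unless $L$ is essentially surjective. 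This is an imprecision in the lemma statement itself (and in the reduction to \cref{ex:maps-cospan-dbl-cat}, where the middle object is unrestricted) rather than a gap in your argument; your honest restriction of the biconditional to ``the pushouts arising in composition'' is the accurate version. The minor slips (calling $L(0_{\cat{A}}) \to x \leftarrow L(0_{\cat{A}})$ a span rather than a cospan) do not affect the proof.
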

\begin{proof}
  The proposed lax double functor
  $\SCsp{L}{F_1}: \SCsp{L}{\cat{X}} \to \SCsp{F_1 \circ L}{\cat{X}'}$ acts on
  cospans and maps of cospans in exactly the same way as the lax double functor
  $\Csp{F_1}: \Csp{\cat{X}} \to \Csp{\cat{X}'}$ reviewed in
  \cref{ex:maps-cospan-dbl-cat}. The proof thus carries over directly.
\end{proof}

In the final lemma, we isolate the maps between structured cospan double
categories induced by natural transformations between the structuring functors.

\begin{lemma}
  Let $\cat{X}$ be a category with pushouts and let
  $\alpha: L' \Rightarrow L: \cat{A} \to \cat{X}$ be a natural transformation.
  Then there is a lax double functor
  $\alpha^*: \SCsp{L}{\cat{X}} \to \SCsp{L'}{\cat{X}}$ that acts
  \begin{itemize}[noitemsep]
    \item on objects and arrows, as the identity;
    \item on proarrows $a \proTo b$, by restricting the underlying cospan
      $L(a) \rightarrow x \leftarrow L(b)$ along the components
      $\alpha_a: L'(a) \to L(a)$ and $\alpha_b: L'(b) \to L(b)$;
    \item on cells $\inlineCell{a}{b}{c}{d}{m}{n}{f}{g}{}$, by pasting the
      naturality squares for $f$ and $g$:
      \useshortskip
      \begin{equation*}
        \begin{tikzcd}
          {L'(a)} & {L(a)} & x & {L(b)} & {L'(b)} \\
          {L'(c)} & {L(c)} & y & {L(d)} & {L'(d)}
          \arrow["{L(f)}"', from=1-2, to=2-2]
          \arrow["{\alpha_a}", from=1-1, to=1-2]
          \arrow["{L'(f)}"', from=1-1, to=2-1]
          \arrow["{\alpha_c}"', from=2-1, to=2-2]
          \arrow[from=1-2, to=1-3]
          \arrow[from=2-2, to=2-3]
          \arrow[from=2-4, to=2-3]
          \arrow[from=1-4, to=1-3]
          \arrow["{\alpha_b}"', from=1-5, to=1-4]
          \arrow["{L(g)}", from=1-4, to=2-4]
          \arrow["h"', from=1-3, to=2-3]
          \arrow["{L'(g)}", from=1-5, to=2-5]
          \arrow["{\alpha_d}", from=2-5, to=2-4]
        \end{tikzcd}.
      \end{equation*}
  \end{itemize}
  The laxator
  $\alpha^*_{m,n}: \alpha^*(m) \odot \alpha^*(n) \to \alpha^*(m \odot n)$ for
  proarrows $m = (a,\; L(a) \rightarrow x \leftarrow L(b),\; b)$ and
  $n = (b,\; L(b) \rightarrow y \leftarrow L(c),\; c)$ has apex map given by the
  universal property of the pushout over $L'(b)$:
  \begin{equation*}
    \begin{tikzcd}
      && y \\
      {L'(b)} & {L(b)} & {x +_{L'(b)} y} & {x +_{L(b)} y} \\
      && x
      \arrow["{\iota_x}"', from=3-3, to=2-4]
      \arrow["{\iota'_x}", from=3-3, to=2-3]
      \arrow["{\iota'_y}"', from=1-3, to=2-3]
      \arrow["{\iota_y}", from=1-3, to=2-4]
      \arrow[dashed, from=2-3, to=2-4]
      \arrow["{\alpha_b}", from=2-1, to=2-2]
      \arrow[from=2-2, to=3-3]
      \arrow[from=2-2, to=1-3]
    \end{tikzcd}.
  \end{equation*}
  The unitor $\alpha^*_a: \id_a' \to \alpha^*(\id_a)$ for object $a \in \cat{A}$
  has apex map $\alpha_a: L'(a) \to L(a)$. The lax double functor $\alpha^*$ is
  pseudo whenever $\alpha$ is a natural isomorphism, and it is automatically
  cocartesian whenever the structured cospan double categories are cocartesian.
\end{lemma}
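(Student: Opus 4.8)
The plan is to build $\alpha^*$ explicitly, check the lax double functor axioms, and then dispatch the pseudo and cocartesian refinements. First I would set $\alpha^*_0 \coloneqq \id_{\cat{A}}$ and, on a proarrow $(a, L(a) \xrightarrow{\ell} x \xleftarrow{r} L(b), b)$, take the restriction of its underlying cospan along $\alpha_a$ and $\alpha_b$ in the equipment $\Csp{\cat{X}}$ (\cref{ex:cospan-dbl-cat}), yielding the $L'$-structured cospan $(a, L'(a) \xrightarrow{\ell\circ\alpha_a} x \xleftarrow{r\circ\alpha_b} L'(b), b)$, whose feet are indeed $L'(a)$ and $L'(b)$. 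On a cell with foot maps $f\colon a \to c$ and $g\colon b \to d$ I would paste the naturality squares of $\alpha$ at $f$ and $g$ onto the left and right of the given cospan morphism, as displayed in the statement; the resulting rectangle is a morphism of $L'$-structured cospans exactly because those two squares commute and the original cell already commutes. Functoriality of $\alpha^*_1$ then follows from functoriality of composition in $\Csp{\cat{X}}_1$ together with the fact that the naturality squares at $f$ and at a composable $f'$ paste vertically to the naturality square at $f'\circ f$; since the apex and feet carry over unchanged, $\src$ and $\tgt$ are preserved over $\alpha^*_0$.

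Next I would supply the comparison data and check coherence. For proarrows $m,n$ meeting at $b$, the composites $\alpha^*(m)\odot\alpha^*(n)$ and $\alpha^*(m\odot n)$ carry the same legs out of $x$ and $y$, into the apexes $x +_{L'(b)} y$ and $x +_{L(b)} y$ respectively, so the universal property of the pushout over $L'(b)$ produces the unique apex map displayed in the statement; one checks it respects the outer feet, giving the laxator. The unitor at $a$ is the globular cell with apex $\alpha_a\colon L'(a) \to L(a)$. The real work is the naturality of the laxators and the associativity and unit coherence axioms, and I expect this to be the main obstacle; however, each axiom is an identity between two maps out of a (possibly iterated) pushout that agree after precomposition with the coprojections, so all of them hold by the uniqueness clause of the pushout universal property, and I would not write the calculations out.

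For the pseudo claim, suppose $\alpha$ is a natural isomorphism. Then each $\alpha_b$ is invertible, so the span $x \leftarrow L'(b) \to y$ presenting $x +_{L'(b)} y$ is obtained from the span $x \leftarrow L(b) \to y$ presenting $x +_{L(b)} y$ by precomposition with the isomorphism $\alpha_b$; the two spans are therefore isomorphic and the laxator apex is the induced canonical isomorphism of their pushouts. The unitor apex $\alpha_a$ is invertible as well, so every laxator and unitor is a globular isomorphism and $\alpha^*$ is pseudo.

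Finally, assume $\cat{A}$ and $\cat{X}$ have finite coproducts preserved by $L$ and $L'$, so both structured cospan double categories are cocartesian. By \cref{def:cocartesian-dbl-functor} it suffices to show that the underlying functors $\alpha^*_0$ and $\alpha^*_1$ preserve finite coproducts; the first is immediate since $\alpha^*_0 = \id_{\cat{A}}$. For the second, the decisive point is that $\alpha$ is compatible with the coproduct comparisons $L_{a,a'}$ and $L'_{a,a'}$ of \cref{thm:structured-csp-cocartesian}: precomposing with the coprojections and invoking naturality of $\alpha$ at $\iota_a$ and $\iota_{a'}$ gives
\begin{equation*}
  \alpha_{a+a'}\circ L'_{a,a'} = L_{a,a'}\circ(\alpha_a+\alpha_{a'}),
  \qquad\text{hence}\qquad
  L_{a,a'}^{-1}\circ\alpha_{a+a'} = (\alpha_a+\alpha_{a'})\circ (L'_{a,a'})^{-1}.
\end{equation*}
Substituting the second identity into the feet of $\alpha^*(m+m')$ shows it coincides with the coproduct $\alpha^*(m)+\alpha^*(m')$ formed in $\SCsp{L'}{\cat{X}}_1$, and the same computation handles cells; preservation of the initial proarrow is automatic because $\alpha_0\colon L'(0)\to L(0)$ is a map of initial objects. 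Thus $\alpha^*_1$ preserves finite coproducts and $\alpha^*$ is cocartesian whenever the two structured cospan double categories are, completing the plan.
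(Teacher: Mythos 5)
Your proposal is correct and follows essentially the same route as the paper's proof: the coherence axioms for the laxators and unitors are dispatched by the uniqueness clause of the pushout universal property, and the cocartesianness claim rests on exactly the compatibility square $\alpha_{a+a'}\circ L'_{a,a'} = L_{a,a'}\circ(\alpha_a+\alpha_{a'})$ (and its counterpart for initial objects) that the paper records, from which restriction along $\alpha$ is seen to commute with restriction along the inverse comparison maps. You merely spell out more of the routine verifications than the paper chooses to.
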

\begin{proof}
  The laxators and unitors obey the coherence axioms by the uniqueness part of
  the universal property. Importantly, the last statement about cocartesianness
  holds because natural transformations automatically commute with coproducts.
  That is, if $\cat{A}$ and $\cat{X}$ have finite coproducts, then, using the
  notation of the proof of \cref{thm:structured-csp-cocartesian}, the following
  diagrams commute for all objects $a, b \in \cat{A}$:
  \begin{equation*}
    \begin{tikzcd}
      {L'(a)+L'(b)} & {L'(a+b)} \\
      {L(a)+L(b)} & {L(a+b)}
      \arrow["{L'_{a,b}}", from=1-1, to=1-2]
      \arrow["{\alpha_a + \alpha_b}"', from=1-1, to=2-1]
      \arrow["{\alpha_{a+b}}", from=1-2, to=2-2]
      \arrow["{L_{a,b}}"', from=2-1, to=2-2]
    \end{tikzcd}
    \qquad\text{and}\qquad
    \begin{tikzcd}[row sep=tiny]
      & {L'(0_{\cat{A}})} \\
      {0_{\cat{X}}} \\
      & {L(0_{\cat{A}})}
      \arrow["{L'_0}", from=2-1, to=1-2]
      \arrow["{L_0}"', from=2-1, to=3-2]
      \arrow["{\alpha_0}", from=1-2, to=3-2]
    \end{tikzcd}.
  \end{equation*}
  Restricting along the components of $\alpha$ thus commutes with restricting
  along the inverse comparison maps and so also commutes with coproducts of
  structured cospans.
\end{proof}

\begin{proof}[Proof of \cref{thm:maps-structured-csp}]
  Using the three lemmas, the lax double functor
  $\dbl{F}: \SCsp{L}{\cat{X}} \to \SCsp{L'}{\cat{X}'}$ is realized as the
  composite
  \begin{equation*}
    \dbl{F}: \SCsp{L}{\cat{X}}
      \xrightarrow{\SCsp{L}{F_1}} \SCsp{F_1 \circ L}{\cat{X}'}
      \xrightarrow{\alpha^*} \SCsp{L' \circ F_0}{\cat{X}'}
      \xrightarrow{\SCsp{F_0}{\cat{X'}}} \SCsp{L'}{\cat{X}'}.
    \qedhere
  \end{equation*}
\end{proof}

\section{Decorated Cospans as a Double Grothendieck Construction}
\label{sec:decorated-cospans}

Decorated cospans represent open systems as cospans with apexes decorated by
extra data \cite{fong2015,baez2022}. For example, open dynamical systems
comprise a cospan of finite sets along with a dynamical system whose set of
state variables is the apex set \cite{baez2017}. In contrast to structured
cospans, the symmetric monoidal product of decorated cospans need not satisfy a
universal property such as cocartesianness. Decorated cospans are therefore
applicable in certain situations where structured cospans are not, at the
expense of requiring more data to construct.

The Grothendieck construction $\int F$ of a functor $F: \cat{A} \to \Cat$ can be
thought to decorate the objects of $\cat{A}$ with data from $F$, inasmuch as the
objects of $\int F$ consist of an object $a \in \cat{A}$ together with an object
$x \in F(a)$ (the ``decoration''). So one might suppose that decorated cospans
arise from a Grothendieck construction. For that to be the case, the cospans
being decorated must be the \emph{objects} of a category. Fortunately, as we
emphasized in \cref{sec:introduction}, that is precisely how cospans are seen by
the modern theory of double categories. In this section, we reconstruct and
generalize the double category of decorated cospans using the double-categorical
analogue of the Grothendieck construction.

\subsection{Double Grothendieck construction}

In their study of double fibrations \cite{cruttwell2022}, Cruttwell, Lambert,
Pronk, and Szyld introduced a Grothendieck construction for double categories,
taking as input a lax double functor into $\Span{\Cat}$.\footnote{In its most
  general form, the double Grothendieck construction takes as input a lax double
  \emph{pseudo} functor into $\Span{\Cat}$, analogous to how the Grothendieck
  construction takes a \emph{pseudo}functor into $\Cat$. For simplicity, we
  eschew this aspect but see \cite[Definition 3.12]{cruttwell2022}.}

Before stating the construction, we unpack some of the considerable amount of
data contained in a lax double functor $F: \dbl{A} \to \Span{\Cat}$. First,
there are natural transformations
\begin{equation*}
  \sigma: \apex \circ F_1 \Rightarrow F_0 \circ \src: \dbl{A}_1 \to \Cat
  \qquad\text{and}\qquad
  \tau: \apex \circ F_1 \Rightarrow F_0 \circ \tgt: \dbl{A}_1 \to \Cat
\end{equation*}
whose components are the functors $\sigma_m$ and $\tau_m$ defined by
\begin{equation*}
  F_1(m) \eqqcolon \left(
    F_0(a) = \leftfoot(F_1(m)) \xleftarrow{\sigma_m}
    \apex(F_1(m)) \xrightarrow{\tau_m} \rightfoot(F_1(m)) = F_0(b)
  \right)
\end{equation*}
for each proarrow $m: a \proTo b$ in $\dbl{A}$. The naturality squares for
$\sigma$ and $\tau$ are precisely the maps of spans
\begin{equation*}
  F_1(\alpha) = \left(
    \begin{tikzcd}
      {F_0(a)} & {\leftfoot(F_1(m))} & {\apex(F_1(m))} & {\rightfoot(F_1(m))} & {F_0(b)} \\
      {F_0(c)} & {\leftfoot(F_1(n))} & {\apex(F_1(n))} & {\rightfoot(F_1(n))} & {F_0(d)}
      \arrow["{\apex(F_1(\alpha))}"', from=1-3, to=2-3]
      \arrow["{\sigma_m}"', from=1-3, to=1-2]
      \arrow["{F_0(f)}"', from=1-1, to=2-1]
      \arrow["{\sigma_n}", from=2-3, to=2-2]
      \arrow[Rightarrow, no head, from=1-2, to=1-1]
      \arrow[Rightarrow, no head, from=2-2, to=2-1]
      \arrow["{\leftfoot(F_1(\alpha))}"', from=1-2, to=2-2]
      \arrow["{\tau_m}", from=1-3, to=1-4]
      \arrow["{\tau_n}"', from=2-3, to=2-4]
      \arrow["{\rightfoot(F_1(\alpha))}", from=1-4, to=2-4]
      \arrow[Rightarrow, no head, from=1-4, to=1-5]
      \arrow["{F_0(g)}", from=1-5, to=2-5]
      \arrow[Rightarrow, no head, from=2-4, to=2-5]
    \end{tikzcd}
  \right)
\end{equation*}
for each cell $\inlineCell{a}{b}{c}{d}{m}{n}{f}{g}{\alpha}$ in $\dbl{A}$.
Writing $F_{m,n}: F(m) \odot F(n) \to F(m \odot n)$ and
$F_a: \id_{Fa} \to F(\id_a)$ for the laxators and unitors of $F$, there are also
natural families of functors
\begin{equation*}
  \Phi_{m,n} \coloneqq \apex(F_{m,n}):
    \apex(F(m)) \prescript{}{\tau_m}{\times_{\sigma_n}} \apex(F(n)) \to
    \apex(F(m \odot n))
\end{equation*}
and $\Phi_a \coloneqq \apex(F_a): F(a) \to \apex(F(\id_x))$, indexed by
proarrows $a \xproTo{m} b \xproTo{n} c$ and objects $a$ in $\dbl{A}$.

Using this notation, the double Grothendieck construction \cite[Theorem
3.51]{cruttwell2022} appears as:

\begin{theorem} \label{thm:dbl-grothendieck}
  Given a lax double functor $F: \dbl{A} \to \Span{\Cat}$, there is a double
  category $\int F$, the \define{double Grothendieck construction} of $F$, with
  underlying categories ${(\int F)_0 = \int F_0}$ and
  ${(\int F)_1 = \int (\apex \circ F_1)}$. Explicitly, the double category
  $\int F$ has
  \begin{itemize}
    \item as objects, pairs $(a,x)$ where $a$ is an object of $\dbl{A}$ and $x$
      is an object of $F(a)$;
    \item as arrows $(a,x) \to (b,y)$, pairs $(f, \phi)$ where $f: a \to b$ is
      an arrow of $\dbl{A}$ and $\phi: F(f)(x) \to y$ is a morphism of $F(b)$;
    \item as proarrows $(a,x) \proTo (b,y)$, pairs $(m, s)$ where
      $m: a \proTo b$ is a proarrow of $\dbl{A}$ and $s$ is an object of
      $\apex(F(m))$ such that $\sigma_m(s) = x$ and $\tau_m(s) = y$;
    \item as cells
      $\inlineCell{(a,x)}{(b,y)}{(c,w)}{(d,z)}{(m,s)}{(n,t)}{(f,\phi)}{(g,\psi)}{}$,
      pairs $(\alpha, \nu)$ such that
      $\inlineCell{a}{b}{c}{d}{m}{n}{f}{g}{\alpha}$ is a cell in $\dbl{A}$ and
      ${\nu: \apex(F(\alpha))(s) \to t}$ is a morphism of $\apex(F(n))$ such
      that $\sigma_n(\nu) = \phi$ and $\tau_n(\nu) = \psi$.
  \end{itemize}
  External composition and identities in $\int F$ are as follows.
  \begin{itemize}
    \item The composite of proarrows
      $(a,x) \xproTo{(m,s)} (b,y) \xproTo{(n,t)} (c,z)$ is
      $(m \odot n, \Phi_{m,n}(s,t)): (a,x) \proTo (b,y)$.
    \item The external composite of cells is
    \begin{equation*}
      \begin{tikzcd}
        {(a,x)} & {(b,y)} & {(c,z)} \\
        {(a',x')} & {(b',y')} & {(c',z')}
        \arrow[""{name=0, anchor=center, inner sep=0}, "{(m,s)}", "\shortmid"{marking}, from=1-1, to=1-2]
        \arrow[""{name=1, anchor=center, inner sep=0}, "{(n,t)}", "\shortmid"{marking}, from=1-2, to=1-3]
        \arrow[""{name=2, anchor=center, inner sep=0}, "{(m', s')}"', "\shortmid"{marking}, from=2-1, to=2-2]
        \arrow["{(f,\phi)}"', from=1-1, to=2-1]
        \arrow[""{name=3, anchor=center, inner sep=0}, "{(n', t')}"', "\shortmid"{marking}, from=2-2, to=2-3]
        \arrow["{(g,\psi)}"{description}, from=1-2, to=2-2]
        \arrow["{(h,\eta)}", from=1-3, to=2-3]
        \arrow["{(\alpha, \mu)}"{description}, draw=none, from=0, to=2]
        \arrow["{(\beta,\nu)}"{description}, draw=none, from=1, to=3]
      \end{tikzcd}
      \qquad\coloneqq\qquad
      \begin{tikzcd}
        {(a,x)} && {(c,z)} \\
        {(a',x')} && {(c',z')}
        \arrow["{(f,\phi)}"', from=1-1, to=2-1]
        \arrow["{(h,\eta)}", from=1-3, to=2-3]
        \arrow[""{name=0, anchor=center, inner sep=0}, "{(m \odot n, \Phi_{m,n}(s,t))}", "\shortmid"{marking}, from=1-1, to=1-3]
        \arrow[""{name=1, anchor=center, inner sep=0}, "{(m' \odot n', \Phi_{m',n'}(s',t'))}"', "\shortmid"{marking}, from=2-1, to=2-3]
        \arrow["{(\alpha \odot \beta, \Phi_{m',n'}(\mu,\nu))}"{description}, draw=none, from=0, to=1]
      \end{tikzcd}.
    \end{equation*}
    \item The identity proarrow at object $(a,x)$ is $(\id_a, \Phi_a(x))$.
    \item The identity cell at arrow $(f,\phi): (a,x) \to (b,y)$ is
      $(\id_f, \Phi_b(\phi))$.
  \end{itemize}
  Moreover, there is a canonical \define{projection} $\pi_F: \int F \to \dbl{A}$,
  which is a strict double functor.
\end{theorem}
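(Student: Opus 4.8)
The plan is to realize $\int F$ as a pseudocategory in $\Cat$, leaning at every stage on the functoriality of the ordinary Grothendieck construction. First I would note that the two underlying categories are handed to us as classical Grothendieck constructions: $(\int F)_0 = \int F_0$ for the functor $F_0: \dbl{A}_0 \to \Cat$, and $(\int F)_1 = \int(\apex \circ F_1)$ for the functor $\apex \circ F_1: \dbl{A}_1 \to \Cat$. Both are categories with no further argument needed, and unwinding the definitions recovers exactly the objects, arrows, proarrows, and cells described in the statement.

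Next I would construct the source and target functors using the following functoriality principle: a functor $G: \cat{B} \to \cat{C}$ together with a natural transformation $P \Rightarrow Q \circ G$ between functors $P: \cat{B} \to \Cat$ and $Q: \cat{C} \to \Cat$ induces a functor $\int P \to \int Q$ lying over $G$. Applying this to the pair $(\src, \sigma)$ yields the source functor $(\int F)_1 \to (\int F)_0$, sending $(m,s)$ to $(\src m, \sigma_m(s))$; applying it to $(\tgt, \tau)$ yields the target, sending $(m,s)$ to $(\tgt m, \tau_m(s))$. These match the proarrow data prescribed by the theorem.

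The heart of the construction is the external composition, and the key step is to identify the pullback category $(\int F)_1 \times_{(\int F)_0} (\int F)_1$ as itself a Grothendieck construction. An object of this pullback is a pair $((m,s),(n,t))$ with $\tgt m = \src n$ and $\tau_m(s) = \sigma_n(t)$; equivalently, it is a composable pair $(m,n)$ in $\dbl{A}_1 \times_{\dbl{A}_0} \dbl{A}_1$ together with an object of the pullback category $\apex(F(m)) \times_{F_0(b)} \apex(F(n))$ formed along $\tau_m$ and $\sigma_n$. Writing $P$ for the functor on $\dbl{A}_1 \times_{\dbl{A}_0} \dbl{A}_1$ assigning this pullback category to $(m,n)$, we thus have $(\int F)_1 \times_{(\int F)_0} (\int F)_1 \cong \int P$, and the domain of $P$ is precisely the domain of the laxator $\Phi_{m,n}$. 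The family $\Phi$ is then a natural transformation $P \Rightarrow (\apex \circ F_1) \circ \odot$, so the same functoriality principle applied to $(\odot, \Phi)$ produces the external composition functor $\int P \to (\int F)_1$, acting as $((m,s),(n,t)) \mapsto (m \odot n, \Phi_{m,n}(s,t))$. The identity functor $(\int F)_0 \to (\int F)_1$ arises in the same way from the identity-assigning functor $\id: \dbl{A}_0 \to \dbl{A}_1$ together with the unitor family $\Phi_a$.

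The remaining and most delicate task is to supply the associator and unitor globular isomorphisms and verify the pentagon and triangle coherences, and this is where I expect the real work to lie. Here I would transport the coherence data of the lax double functor $F$: the associativity and unitality constraints relating its laxators $\Phi_{m,n}$ and unitors $\Phi_a$ — which witness that $F$ is a lax morphism into the pseudocategory $\Span{\Cat}$ — descend, under $\apex$ and the Grothendieck construction, to invertible globular cells in $\int F$, and their coherence axioms follow from those of $F$ together with the pseudocategory structure of $\Span{\Cat}$. The bookkeeping of matching $F$'s laxator coherences against the required identities, while keeping track of the base associators and unitors in $\dbl{A}$, is the genuine obstacle. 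Finally, the projection $\pi_F: \int F \to \dbl{A}$ is assembled from the canonical strict projections $\int F_0 \to \dbl{A}_0$ and $\int(\apex \circ F_1) \to \dbl{A}_1$, which by construction strictly commute with source, target, external composition, and identities; hence $\pi_F$ is a strict double functor.
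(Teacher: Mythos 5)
The paper does not prove this theorem at all: it is quoted verbatim (in the paper's notation) from Cruttwell, Lambert, Pronk, and Szyld, citing their Theorem 3.51, and is taken as given. So your proposal is not so much an alternative route as the only actual argument on the table, and its outline is sound --- indeed it is essentially how the cited source proceeds. Your three structural moves are the right ones: recognizing $(\int F)_0$ and $(\int F)_1$ as one-dimensional Grothendieck constructions; invoking the functoriality principle that a natural transformation $P \Rightarrow Q \circ G$ induces a functor $\int P \to \int Q$ over $G$ (applied to $(\src,\sigma)$, $(\tgt,\tau)$, and $(\odot,\Phi)$); and identifying the pullback $(\int F)_1 \times_{(\int F)_0} (\int F)_1$ as $\int P$ so that external composition is again an instance of the same principle. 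Two points deserve more care than you give them. First, for the composition functor to land where it should --- so that $(m\odot n, \Phi_{m,n}(s,t))$ is genuinely a proarrow from $(a,x)$ to $(c,z)$, i.e.\ commutes with your source and target functors --- you need that the laxators $F_{m,n}$ and unitors $F_a$ are \emph{globular} cells of $\Span{\Cat}$, so their apex maps commute with the span legs; naturality of $\Phi$ alone does not supply this, though it is automatic from the definition of a lax double functor. Second, the coherence step that you defer is less onerous than you suggest once set up correctly: the associator of $\int F$ is the pair $(\alpha^{\dbl{A}}_{m,n,p}, \nu)$ whose first component is the associator of $\dbl{A}$ and whose second component is the canonical isomorphism furnished by the associativity hexagon for the laxators of $F$ read at the level of apexes; it is invertible because both components are, and the pentagon and triangle identities for $\int F$ follow componentwise from those of $\dbl{A}$ and the coherence axioms of $F$. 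What your reconstruction buys, beyond what the paper offers, is an explanation of \emph{why} the explicit formulas in the statement are forced; what it costs is precisely the bookkeeping you flag, which the paper avoids by outsourcing the theorem.
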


\subsection{A Modular Reconstruction of Decorated Cospans}

To define decorated cospans, we apply the double Grothendieck construction in
the case that the base double category $\dbl{A}$ is a double category of
cospans. Specifically, let $\cat{A}$ be a category with pushouts and let
$F: \Csp{\cat{A}} \to \Span{\Cat}$ be a lax double functor. Then the
\define{double category of $F$-decorated cospans}, denoted $\DCsp{F}$, is the
double Grothendieck construction $\int F$.

This notion of decorated cospan is more general than the established one
\cite[\S 2]{baez2022} in two different ways. First, the decorations assigned to
a cospan may depend on the whole cospan, not just on its apex. Second, the feet
of the cospans receive their own decorations, which can be extracted from the
cospan decorations using the transformations denoted $\sigma$ and $\tau$ above.
For two decorated cospans to be composable, not only must the feet of the
cospans be compatible, so must be the decorations on the feet. We will see an
application that takes advantage of this extra generality shortly. Before that,
we show how to recover the original notion of decorated cospan based on lax
monoidal functors into $(\Cat,\times)$.

\begin{corollary}
  Let $\cat{A}$ be a category with finite colimits and let
  $F: (\cat{A}, +) \to (\Cat, \times)$ be a lax monoidal functor. Then there is
  a double category $\DCsp{F}$ that has
  \begin{itemize}[noitemsep]
    \item as objects, the objects of $\cat{A}$;
    \item as arrows, the morphisms of $\cat{A}$;
    \item as proarrows $a \proTo b$, \define{$F$-decorated cospans} with feet
      $a$ and $b$, which are cospans $p = (a \rightarrow m \leftarrow b)$ in
      $\cat{A}$ together with a \define{decoration} $s \in F(m)$;
    \item as cells $\inlineCell{a}{b}{c}{d}{(p,s)}{(q,t)}{f}{g}{}$ where
      $p = (a \rightarrow m \leftarrow b)$ and
      $q = (c \rightarrow n \leftarrow d)$, \define{morphisms of $F$-decorated
      cospans} with foot maps $f$ and $g$, which are morphisms of cospans in
      $\cat{A}$ of the form
      \begin{equation*}
        \begin{tikzcd}
          a & m & c \\
          b & n & d
          \arrow[from=1-1, to=1-2]
          \arrow[from=1-3, to=1-2]
          \arrow[from=2-1, to=2-2]
          \arrow[from=2-3, to=2-2]
          \arrow["f"', from=1-1, to=2-1]
          \arrow["g", from=1-3, to=2-3]
          \arrow["h"', from=1-2, to=2-2]
        \end{tikzcd}
      \end{equation*}
      together with a \define{decoration morphism} $\nu: F(h)(s) \to t$ in
      $F(n)$.
  \end{itemize}
  The composite of proarrows $a \xproTo{(p,s)} b \xproTo{(q,t)} c$, where
  $p = (a \rightarrow m \leftarrow b)$ and $q = (b \rightarrow n \leftarrow c)$,
  is the proarrow $(p \odot q, \Phi_{m,n}(s,t))$, where the cospan $p \odot q$
  is given by pushout in $\cat{A}$ and the functor $\Phi_{m,n}$ is the composite
  \begin{equation*}
    \Phi_{m,n}: F(m) \times F(n) \xrightarrow{F_{m,n}}
      F(m+n) \xrightarrow{F([\iota_m, \iota_n])}
      F(m +_b n).
  \end{equation*}
  The identity proarrow at $a \in \cat{A}$ is $(\id_a, \Phi_a)$, where $\Phi_a$
  is the composite $\cat{1} \xrightarrow{F_0} F(0) \xrightarrow{F(!)} F(a)$.

  Moreover, there is a canonical \define{projection}
  $\pi_F: \DCsp{F} \to \Csp{\cat{A}}$, which is a strict double functor.
\end{corollary}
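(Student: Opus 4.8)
The plan is to realize this corollary as an instance of the double Grothendieck construction (\cref{thm:dbl-grothendieck}). To that end, I would first repackage the lax monoidal functor $F: (\cat{A}, +) \to (\Cat, \times)$ as a lax double functor $\hat{F}: \Csp{\cat{A}} \to \Span{\Cat}$, then set $\DCsp{F} \coloneqq \int \hat{F}$ and read off the explicit description from the theorem. The guiding idea is that the \emph{original} notion of decorated cospan decorates only the apex, never the feet; in the language of the general construction, this means the foot decorations should be trivial, so that the transformations $\sigma$ and $\tau$ become constant.

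Concretely, I would define $\hat{F}_0: \cat{A} \to \Cat$ to be constant at the terminal category $\cat{1}$, sending every object and arrow to $\cat{1}$ and $\id_{\cat{1}}$. On proarrows, $\hat{F}_1$ sends a cospan $p = (a \rightarrow m \leftarrow b)$ to the span $\cat{1} \leftarrow F(m) \rightarrow \cat{1}$, with apex $F(m)$ and necessarily trivial legs; on a morphism of cospans with apex map $h: m \to n$ it acts by the induced map of spans $F(h): F(m) \to F(n)$. Functoriality of $\hat{F}_1$ and its compatibility with source and target are then immediate from functoriality of $F$ and the fact that the legs land in $\cat{1}$. For the laxator $\hat{F}_{p,q}$ on composable cospans $a \rightarrow m \leftarrow b \rightarrow n \leftarrow c$, note that in $\Span{\Cat}$ the composite $\hat{F}(p) \odot \hat{F}(q)$ is computed by pullback over the shared foot $\cat{1}$, which is simply the product $F(m) \times F(n)$; I would equip the laxator with the apex map $\Phi_{m,n}$ displayed in the statement, namely $F(m) \times F(n) \xrightarrow{F_{m,n}} F(m+n) \xrightarrow{F([\iota_m,\iota_n])} F(m +_b n)$, combining the monoidal laxator of $F$ with functoriality along the canonical comparison from the coproduct to the pushout. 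Dually, the unitor $\hat{F}_a$ has apex map $\Phi_a \coloneqq F(!) \circ F_0: \cat{1} \to F(a)$.

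The substance of the proof is checking that $\hat{F}$ satisfies the coherence axioms of a lax double functor, and this is the step I expect to be the main obstacle. The unit axioms follow from the left and right unit coherences of the lax monoidal structure on $F$ together with the universal property of pushouts along identities. The associativity axiom is the delicate one: it requires comparing the two bracketings of a triple composite $p \odot q \odot r$, whose apexes are the canonically isomorphic iterated pushouts $(m +_b n) +_c k$ and $m +_b (n +_c k)$, and verifying that the coherence of $F_{-,-}$ transports correctly along these isomorphisms. I would reduce this to the lax monoidal associativity pentagon for $F$ by factoring each $\Phi$ through $F_{-,-}$ and the functorial comparison maps, then invoking the universal property of the iterated pushout to identify the two resulting functors $F(m) \times F(n) \times F(k) \to F(m +_b n +_c k)$. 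Once $\hat{F}$ is confirmed to be a lax double functor, applying \cref{thm:dbl-grothendieck} yields the double category $\int \hat{F}$, and unwinding its explicit description — using that $\hat{F}_0$ is constant at $\cat{1}$, so that objects reduce to objects of $\cat{A}$, arrows to morphisms of $\cat{A}$, proarrows to cospans equipped with a decoration $s \in F(m)$, and cells to maps of cospans equipped with a decoration morphism $\nu: F(h)(s) \to t$ — reproduces the stated data, external composition, identities, and the strict projection $\pi_F$ verbatim.
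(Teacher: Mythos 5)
Your construction is correct, and it lands on exactly the same lax double functor as the paper's: your $\hat F$ coincides with the paper's $\tilde F$ on objects, proarrows, laxators, and unitors. The difference is entirely in how lax-functoriality is established. You define $\hat F$ in one piece and then verify the coherence axioms directly, correctly identifying the associativity axiom---comparing the two bracketings of $\Phi$ across the iterated pushouts $(m +_b n) +_c k \cong m +_b (n +_c k)$---as the main burden; that check does go through, by naturality of $F_{-,-}$, the associativity coherence of the lax monoidal structure (a hexagon relating the associators of $+$ and $\times$, not a pentagon), and the uniqueness clause of the universal property of the iterated pushout. The paper instead factors $\tilde F$ as the composite $\Csp{\cat{A}} \xrightarrow{\Apex} \MonDbl{(\cat{A},+)} \xrightarrow{\MonDbl{F}} \MonDbl{(\Cat,\times)} \xrightarrow{\Apex_*} \Span{\Cat}$, where each factor is separately and easily seen to be a lax double functor, so the coherence of the composite is automatic and the delicate associativity check you flag never has to be performed. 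Beyond saving work, the factorization is the source of the paper's conceptual point: the two-stage formula $\Phi_{m,n} = F([\iota_m,\iota_n]) \circ F_{m,n}$ arises because the laxators of $\Apex$ and of $\MonDbl{F}$ compose, which your direct verification reproduces but does not explain. Your route is a legitimate, more elementary alternative, at the cost of carrying out the coherence diagrams by hand; make sure you also record naturality of $\hat F_{p,q}$ with respect to cells, which follows from naturality of $F_{m,n}$ but is part of the data of a lax double functor.
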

\begin{proof}
  We construct the double category $\DCsp{F}$ in a modular fashion by applying
  the double Grothendieck construction to a lax double functor
  $\tilde F: \Csp{\cat{A}} \to \Span{\Cat}$ that is itself the composite of
  three simpler lax double functors:
  \begin{equation*}
    \tilde F: \Csp{\cat{A}} \xrightarrow{\Apex}
      \MonDbl{(\cat{A}, +)} \xrightarrow{\MonDbl{F}}
      \MonDbl{(\Cat, \times)} \xrightarrow{\Apex_*}
      \Span{\Cat}.
  \end{equation*}
  Let us explain each of these. First, any monoidal category
  $(\cat{C}, \otimes, I)$ can be regarded as a double category $\dbl{D}$ whose
  category of objects is trivial, $\dbl{D}_0 = \cat{1}$; whose category of
  morphisms is $\dbl{D}_1 = \cat{C}$; and whose external composition and
  identity are the monoidal product and unit \cite[\S 3.3.4]{grandis2019}. Lax
  monoidal functors then induce lax double functors between such degenerate
  double categories, and monoidal natural transformations induce natural
  transformations of those, so altogether there is a 2-functor
  ${\MonDbl{}: \MonCatLax \to \DblLax}$. In particular, the lax monoidal functor
  $F: (\cat{A},+) \to (\Cat,\times)$ induces a lax double functor $\MonDbl{F}$.

  Next, given a category $\cat{A}$ with finite colimits, the lax double functor
  $\Apex: \Csp{\cat{A}} \to \MonDbl{(\cat{A},+)}$ has the unique map
  $\Apex_0: \cat{A} \xrightarrow{!} \cat{1}$ between categories of objects and
  the functor
  $\Apex_1 := {\cat{A}^{\{\bullet \rightarrow \bullet \leftarrow \bullet\}} \xrightarrow{\apex} \cat{A}}$
  between categories of morphisms. The laxators
  \begin{equation*}
    \Apex_{p,q}: \Apex(p) + \Apex(q) = m + n
      \xrightarrow{[\iota_m,\iota_n]}
      m +_{b} n = \Apex(p \odot n)
  \end{equation*}
  for proarrows $p = (a \rightarrow m \leftarrow b)$ and
  $q = (b \rightarrow n \leftarrow c)$, and the unitors
  $\Apex_a: 0 \xrightarrow{!} a$ for objects $a \in \cat{A}$, are all given by
  the universal properties of the colimits involved.

  Finally, given a category $\cat{C}$ with finite limits, the double functor
  $\Apex_*: \MonDbl{(\cat{C},\times)} \to \Span{\cat{C}}$ has underlying
  functors $(\Apex_*)_0: \cat{1} \to \cat{C}$ picking out the terminal object
  $1$ of $\cat{C}$ and
  $(\Apex_*)_1: \cat{C} \to \cat{C}^{\{\bullet \leftarrow \bullet \rightarrow \bullet\}}$
  sending each object $c \in \cat{C}$ to the span
  $1 \xleftarrow{!} c \xrightarrow{!} 1$. This double functor is pseudo because
  products are isomorphic to pullbacks over the terminal object. By making
  reasonable choices of products and pullbacks, we can even assume that the
  double functor is strict.

  The double category $\DCsp{F}$ is precisely the double Grothendieck
  construction of $\tilde F$ (\cref{thm:dbl-grothendieck}). This follows from
  the formulas for the laxators and unitors of a composite lax double functor
  \cite[Equation 3.63]{grandis2019}. In terms of the notation in the corollary
  statement, the laxators and unitors of the composite $\tilde F$ are
  $\tilde F_{p,q} = \Apex_*(\Phi_{\Apex(p),\Apex(q)})$ and
  $\tilde F_a = \Apex_*(\Phi_a)$.
\end{proof}

This result was first proved in \cite[Theorem 2.1]{baez2022}. Our reconstruction
solves a lingering conceptual puzzle about the composition law for decorated
cospans: why does it involve two operations, instead of just one? As the proof
shows, the reason is that decorated cospans implicitly use a \emph{composite} of
lax double functors. Specifically, laxators from the lax monoidal functor $F$
combine with laxators from the lax double functor $\Apex$ to give the
distinctive formula for composing decorations of decorated cospans.

\subsection{Application: Double Category of Process Theories}

An early and recurring theme of applied category theory is the mathematical
modeling of physical or computational processes by monoidal categories, often
with extra structure \cite{baez2010}. To describe a process syntactically, one
can define, say by generators and relations, a small category $\cat{T}$ with the
relevant structure, and then choose a particular morphism $p$ in $\cat{T}$. The
category $\cat{T}$ defines the basic material for the process and the morphism
$p$ specifies the process itself. Regarding the category $\cat{T}$ as a theory
in the sense of the categorical logic, the pair $(\cat{T},p)$ might be called a
\emph{theory of a process}, or \emph{process theory} for short. For example, in
the author's thesis \cite{patterson2020}, a \emph{statistical theory} is defined
to be a small Markov category \cite{fritz2020} equipped with extra linear
algebraic structure, together with a distinguished morphism $p: \theta \to x$
representing the data generating process for a statistical model.

To be more precise, process theories are defined relative to a \define{concrete
  2-category}, by which we mean a 2-category $\bicat{C}$ equipped with a
2-functor $|-|: \bicat{C} \to \Cat$, giving the \define{underlying category} of
$\bicat{C}$. This 2-functor will often satisfy additional properties, such as
being locally faithful, but we need not assume that. Given a morphism
$F: \cat{X} \to \cat{Y}$ in a concrete 2-category, we will write
$F(x) := |F|(x)$ and $F(f) := |F|(f)$ for the action of the underlying functor
of $F$ on the objects and morphisms of $|\cat{X}|$. As an example, statistical
theories are based on the concrete 2-category of small linear algebraic Markov
categories, structure-preserving monoidal functors, and monoidal natural
transformations \cite{patterson2020}.

Process theories can be composed once their underlying theories are made open.
In the context of statistics, this composition corresponds to making
hierarchical statistical models, where samples from one model become parameters
of the next. To express this mathematically, we construct a double category of
process theories. We need two main ingredients: the double Grothendieck
construction, and an extension of the familiar construction of comma categories
to a lax double functor. We now review the latter, which is interesting in its
own right.

There is a lax double functor $\Comma: \Csp{\Cat} \to \Span{\Cat}$ that is the
identity on objects and arrows and sends a cospan of categories
$(\cat{A} \xrightarrow{i} \cat{X} \xleftarrow{o} \cat{B})$ to the span of
categories
$(\cat{A} \xleftarrow{\pi_{\cat{A}}} i/o \xrightarrow{\pi_{\cat{B}}} \cat{B})$
comprising the comma category $i/o$ with its canonical projections.\footnote{The
  lax double functor $\Comma: \Csp{\bicat{C}} \to \Span{\bicat{C}}$ even
  generalizes from $\bicat{C} = \Cat$ to any 2-category $\bicat{C}$ with comma
  objects, pushouts, and pullbacks \cite[\S 4.5.9]{grandis2019}, although we
  will not use that.} It acts on maps of cospans as
\begin{equation*}
  \begin{tikzcd}
    {\cat{A}} & {\cat{X}} & {\cat{B}} \\
    {\cat{A}'} & {\cat{X}'} & {\cat{B}'}
    \arrow["i", from=1-1, to=1-2]
    \arrow["o"', from=1-3, to=1-2]
    \arrow["H"', from=1-1, to=2-1]
    \arrow["F"', from=1-2, to=2-2]
    \arrow["K", from=1-3, to=2-3]
    \arrow["{i'}"', from=2-1, to=2-2]
    \arrow["{o'}", from=2-3, to=2-2]
  \end{tikzcd}
  \qquad\mapsto\qquad
  \begin{tikzcd}
    {\cat{A}} & {i/o} & {\cat{B}} \\
    {\cat{A}'} & {i'/o'} & {\cat{B}'}
    \arrow["{\pi_{\cat{A}}}"', from=1-2, to=1-1]
    \arrow["{\pi_{\cat{B}}}", from=1-2, to=1-3]
    \arrow["H"', from=1-1, to=2-1]
    \arrow["{\tilde F}"', from=1-2, to=2-2]
    \arrow["K", from=1-3, to=2-3]
    \arrow["{\pi_{\cat{A}'}}", from=2-2, to=2-1]
    \arrow["{\pi_{\cat{B}'}}"', from=2-2, to=2-3]
  \end{tikzcd},
\end{equation*}
where the functor denoted $\tilde F$ sends an object
$(a,\; i(a) \xrightarrow{f} o(b),\; b)$ of the comma category $i/o$ to
\begin{equation*}
  (H(a),\; i'(H(a)) = F(i(a)) \xrightarrow{F(f)} F(o(b)) = o'(H(b)),\; H(b))
\end{equation*}
and a morphism $(h,k)$ to $(H(h), K(k))$.

To describe the laxators, let
$m = {(\cat{A} \xrightarrow{i} \cat{X} \xleftarrow{o} \cat{B})}$ and
$n = {(\cat{B} \xrightarrow{j} \cat{Y} \xleftarrow{p} \cat{C})}$ be composable
cospans of categories and let
$\iota_{\cat{X}}: \cat{X} \to \cat{X} +_{\cat{B}} \cat{Y}$ and
$\iota_{\cat{Y}}: \cat{Y} \to \cat{X} +_{\cat{B}} \cat{Y}$ be the inclusions
into the pushout of categories. Then the apex map of the laxator $\Comma_{m,n}$
is the functor
\begin{equation*}
  (i/o) \times_{\cat{B}} (j/p) \to (\iota_{\cat{X}} \circ i)/(\iota_{\cat{Y}} \circ p)
\end{equation*}
that sends a pair of objects $(a,f,b)$ and $(b,g,c)$ with $o(b) = j(b)$ to
$(a, \iota_{\cat{Y}}(g) \circ \iota_{\cat{X}}(f), c)$, which is well-defined
since $\iota_{\cat{X}}(o(b)) = \iota_{\cat{Y}}(j(b))$ in
$\cat{X} +_{\cat{B}} \cat{Y}$. This functor sends a pair of maps $(h,k)$ and
$(k,\ell)$ to the map $(h,\ell)$. Finally, given a category $\cat{A}$, the apex
map of the unitor $\Comma_{\cat{A}}$ is the functor
$\cat{A} \to 1_{\cat{A}}/1_{\cat{A}}$ that sends an object $a \in \cat{A}$ to
$(a, 1_a, a)$ and a morphism $h$ to $(h,h)$.

\begin{proposition}
  Let $\bicat{C}$ be a concrete 2-category with pushouts. Then there is a double
  category that has
  \begin{itemize}
    \item as objects, an object $\cat{A}$ in $\bicat{C}$ together with an object
      $a \in |\cat{A}|$;
    \item as arrows $(\cat{A},a) \to (\cat{A}',a')$, a morphism
      $H: \cat{A} \to \cat{A}'$ in $\bicat{C}$ together with a morphism
      $h': H(a) \to a'$ in $|\cat{A}'|$;
    \item as proarrows $(\cat{A},a) \proTo (\cat{B},b)$, a cospan in $\bicat{C}$
      of form $m = (\cat{A} \xrightarrow{i} \cat{X} \xleftarrow{o} \cat{B})$
      along with a morphism $f: i(a) \to o(b)$ in $|\cat{X}|$;
    \item as cells
      $\inlineCell{(\cat{A},a)}{(\cat{B},b)}{(\cat{A}',a')}{(\cat{B}',b')}{(m,f)}{(m',f')}{(H,h')}{(K,k')}{}$,
      a morphism $F: \cat{X} \to \cat{X}'$ forming a map of
      cospans
      $ %
        \begin{tikzcd}[cramped, sep=small]
          {\cat{A}} & {\cat{X}} & {\cat{B}} \\
          {\cat{A}'} & {\cat{X}'} & {\cat{B}'}
          \arrow["i", from=1-1, to=1-2]
          \arrow["o"', from=1-3, to=1-2]
          \arrow["H"', from=1-1, to=2-1]
          \arrow["F"', from=1-2, to=2-2]
          \arrow["K", from=1-3, to=2-3]
          \arrow["{i'}"', from=2-1, to=2-2]
          \arrow["{o'}", from=2-3, to=2-2]
        \end{tikzcd} $
      in $\bicat{C}$ and making the following square in $|\cat{X}'|$ commute:
      \begin{equation*}
        \begin{tikzcd}[column sep=small]
          {i'(H(a))} & {F(i(a))} & {F(o(b))} & {o'(K(b))} \\
          {i'(a')} &&& {o'(b')}
          \arrow["{F(f)}", from=1-2, to=1-3]
          \arrow["{f'}", from=2-1, to=2-4]
          \arrow[Rightarrow, no head, from=1-1, to=1-2]
          \arrow["{i'(h')}"', from=1-1, to=2-1]
          \arrow[Rightarrow, no head, from=1-3, to=1-4]
          \arrow["{o'(k')}", from=1-4, to=2-4]
        \end{tikzcd}.
      \end{equation*}
  \end{itemize}
  Two proarrows
  $(\cat{A},a) \xproTo{(m,f)} (\cat{B},b) \xproTo{(n,g)} (\cat{C},c)$, with
  $m = (\cat{A} \xrightarrow{i} \cat{X} \xleftarrow{o} \cat{B})$ and
  $n = (\cat{B} \xrightarrow{j} \cat{Y} \xleftarrow{p} \cat{C})$, have composite
  $(m \odot n, h): (\cat{A},a) \proTo (\cat{C},c)$, where $m \odot n$ is the
  composite cospan in $\bicat{C}$ with apex $\cat{X} +_{\cat{B}} \cat{Y}$ and
  $h$ is given by first composing the images of $f$ and $g$ in
  ${|\cat{X}| +_{|\cat{B}|} |\cat{Y}|}$ and then applying the canonical functor
  ${|\cat{X}| +_{|\cat{B}|} |\cat{Y}| \to |\cat{X} +_{\cat{B}} \cat{Y}|}$. The
  identity proarrow at $(\cat{A},a)$ is $(\id_{\cat{A}}, 1_a)$.
\end{proposition}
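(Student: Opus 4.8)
The plan is to exhibit this double category as a double category of generalized decorated cospans, i.e., as the double Grothendieck construction $\int G$ (\cref{thm:dbl-grothendieck}) of a lax double functor $G \colon \Csp{\bicat{C}} \to \Span{\Cat}$ assembled from two already-constructed pieces. Here $\Csp{\bicat{C}}$ denotes the double category of cospans in the underlying ordinary category of $\bicat{C}$, which exists because $\bicat{C}$ has pushouts; the 2-cells of $\bicat{C}$ play no role in its formation. I would take
\begin{equation*}
  G \colon \Csp{\bicat{C}} \xrightarrow{\Csp{|{-}|}} \Csp{\Cat} \xrightarrow{\Comma} \Span{\Cat},
\end{equation*}
where $\Csp{|{-}|}$ is the lax double functor induced by the underlying-category functor $|{-}| \colon \bicat{C} \to \Cat$ via the cospan functoriality reviewed in \cref{ex:maps-cospan-dbl-cat}, and $\Comma$ is the comma lax double functor described just above. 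Since $\Csp{|{-}|}$ and $\Comma$ are lax, so is their composite $G$, which is exactly the input required by \cref{thm:dbl-grothendieck}; the proposition then follows by unpacking $\int G$ and matching it against that theorem.

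Next I would verify the object-level data. Because $\Comma$ is the identity on objects and arrows, $G_0 = |{-}| \colon \bicat{C} \to \Cat$, so an object of $\int G$ is a pair $(\cat{A}, a)$ with $a \in |\cat{A}|$ and an arrow is a pair $(H, h')$ with $h' \colon |H|(a) \to a'$ in $|\cat{A}'|$, exactly as stated. For a proarrow $m = (\cat{A} \xrightarrow{i} \cat{X} \xleftarrow{o} \cat{B})$, the functor $\Csp{|{-}|}$ sends $m$ to the image cospan $|\cat{A}| \to |\cat{X}| \leftarrow |\cat{B}|$, and $\Comma$ sends that to the span whose apex is the comma category $i/o$; hence $\apex(G_1(m)) = i/o$, with $\sigma_m$ and $\tau_m$ the two projections. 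An object of $i/o$ lying over the feet $a, b$ is precisely a morphism $f \colon i(a) \to o(b)$ in $|\cat{X}|$, and a morphism of $i/o$ is a commuting square of the naturality type. Feeding these through the description of proarrows and cells in \cref{thm:dbl-grothendieck} reproduces the stated proarrows $(m, f)$ and cells, the commuting square in the proposition being exactly the comma-category morphism condition, with $\sigma$- and $\tau$-components recovered as $h'$ and $k'$.

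The substantive step is the composition law, where I expect the only real bookkeeping to lie. The composite of proarrows uses $\Phi_{m,n} = \apex(G_{m,n})$, and the laxator of a composite lax double functor factors as $G_{m,n} = \Comma(\Csp{|{-}|}_{m,n}) \circ \Comma_{\Csp{|{-}|}(m), \Csp{|{-}|}(n)}$. The second factor is the laxator of $\Comma$, whose apex functor composes decorations inside the pushout $|\cat{X}| +_{|\cat{B}|} |\cat{Y}|$, sending a compatible pair $(a,f,b), (b,g,c)$ to $(a,\ \iota_{|\cat{Y}|}(g) \circ \iota_{|\cat{X}|}(f),\ c)$. The first factor is $\Comma$ applied to the laxator of $\Csp{|{-}|}$, whose apex is the canonical comparison functor $|\cat{X}| +_{|\cat{B}|} |\cat{Y}| \to |\cat{X} +_{\cat{B}} \cat{Y}|$ arising precisely because $|{-}|$ need not preserve pushouts; applying it transports the just-composed morphism into $|\cat{X} +_{\cat{B}} \cat{Y}|$. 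Composing the two factors yields exactly the two-step recipe in the statement. The identities are handled the same way: the unitor of $\Comma$ sends a decoration $a$ to $(a, 1_a, a)$, and transporting along the unitor of $\Csp{|{-}|}$ gives the identity proarrow $(\id_{\cat{A}}, 1_a)$. With these identifications in hand, every double-category axiom is inherited from \cref{thm:dbl-grothendieck}, so no further coherence checking is needed; the main obstacle is purely the careful tracing of the composite laxator, not any genuine difficulty.
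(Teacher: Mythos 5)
Your proposal is correct and follows exactly the paper's own proof: apply the double Grothendieck construction to the composite lax double functor $\Csp{\bicat{C}} \xrightarrow{\Csp{|-|}} \Csp{\Cat} \xrightarrow{\Comma} \Span{\Cat}$, where $\Csp{|-|}$ is an instance of \cref{ex:maps-cospan-dbl-cat}. Your careful tracing of the composite laxator is a welcome elaboration of details the paper leaves implicit, but the route is the same.
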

\begin{proof}
  Apply the double Grothendieck construction to the composite lax double functor
  \begin{equation*}
    \Csp{\bicat{C}}
      \xrightarrow{\Csp{|-|}} \Csp{\Cat}
      \xrightarrow{\Comma} \Span{\Cat}.
  \end{equation*}
  Here the lax double functor $\Csp{|-|}$ is a particular case of
  \cref{ex:maps-cospan-dbl-cat}.
\end{proof}

\section{Conclusion}
\label{sec:conclusion}

We have revisited structured and decorated cospans from the perspective of
double category theory, showing that double categories of structured cospans
form cocartesian equipments and that their maps are cocartesian double functors.
We have also reconstructed and generalized double categories of decorated
cospans using the double Grothendieck construction.

Looking to future developments, we have presented a reasonably complete and
self-contained treatment of the theory of structured cospans, but less so for
the theory of decorated cospans. We have not shown how to construct maps between
double categories of decorated cospans, along the lines of Baez et al's
\cite[Theorem 2.5]{baez2022}. Just as the classical Grothendieck construction
for categories is 2-functorial \mbox{\cite[\S 6]{peschke2020}}, so should be the
Grothendieck construction for double categories, which should in turn directly
produce maps between decorated cospan double categories and natural
transformations between those. Equally importantly, we have not recovered the
symmetric monoidal product of decorated cospans, an absence clearly felt in our
example of the double category of process theories. Monoidal products should be
obtained as a corollary of a hypothetical Grothendieck construction for monoidal
double categories, combining the monoidal and double Grothendieck constructions
\cite{moeller2020,cruttwell2022}. In these and other ways, we expect the further
development of the theory of double categories to immediately impact the study
of open systems, simplifying known constructions, suggesting new ones, and
enabling practitioners to focus on applications rather than general theoretical
issues.

\paragraph{Acknowledgments}

This project was partially supported by the Air Force Office of Scientific
Research (AFOSR) Young Investigator Program (YIP) through Award
FA9550-23-1-0133. I thank Nathanael Arkor, John Baez, and Brandon Shapiro for
helpful conversations. I am also grateful to Brandon Shapiro for comments on
early versions of this work.

\newpage
\printbibliography[heading=bibintoc]

\appendix

\section{Cocartesian Equipments}
\label{app:cocartesian-equipments}

In this appendix, we review cocartesian double categories and equipments, and
the maps between them. This material is known but may not be straightforward to
access in the literature. It is included for the reader's convenience.

Just as a cocartesian category is (on one standard definition) a category with
finite coproducts, a cocartesian double category is a double category with
finite double-categorical coproducts. A highly conceptual way to make this
precise is to define a cocartesian double category to be a cocartesian object in
the 2-category $\Dbl$ of double categories, double functors, and natural
transformations. Thus, a double category $\dbl{D}$ is \define{cocartesian} if
the diagonal and terminal double functors,
$\Delta_{\dbl{D}}: \dbl{D} \to \dbl{D} \times \dbl{D}$ and
$!_{\dbl{D}}: \dbl{D} \to \dbl{1}$, have left adjoints in $\Dbl$. This is (dual
to) the approach taken by Aleiferi in her PhD thesis on cartesian double
categories \cite{aleiferi2018}. It will be convenient for us to have a more
concrete description.\footnote{The equivalence of the two definitions follows
  from a general result about double adjunctions \cite[Corollary
  4.3.7]{grandis2019}.}

\begin{definition} \label{def:cocartesian-dbl-cat}
  A double category $\dbl{D}$ is \define{cocartesian} if its underlying
  categories $\dbl{D}_0$ and $\dbl{D}_1$ have finite coproducts; the source and
  target functors $\src, \tgt: \dbl{D}_1 \rightrightarrows \dbl{D}_0$ preserve
  finite coproducts; and the external composition
  $\odot: \dbl{D}_1 \times_{\dbl{D}_0} \dbl{D}_1 \to \dbl{D}_1$ and unit
  $\id: \dbl{D}_0 \to \dbl{D}_1$ also preserve finite coproducts, meaning that
  for all proarrows $x \xproTo{m} y \xproTo{n} z$ and
  $x' \xproTo{m'} y' \xproTo{n'} z'$ and objects $x$ and $x'$ in $\dbl{D}$, the
  canonical comparison cells
  \begin{equation*}
    \begin{tikzcd}
      {x+x'} &&& {z+z'} \\
      {x+x'} &&& {z+z'}
      \arrow[""{name=0, anchor=center, inner sep=0}, "{(m \odot n) + (m' \odot n')}", "\shortmid"{marking}, from=1-1, to=1-4]
      \arrow[Rightarrow, no head, from=1-1, to=2-1]
      \arrow[Rightarrow, no head, from=1-4, to=2-4]
      \arrow[""{name=1, anchor=center, inner sep=0}, "{(m+m') \odot (n+n')}"', "\shortmid"{marking}, from=2-1, to=2-4]
      \arrow["{[\iota_m \odot \iota_n, \iota_{m'} \odot \iota_{n'}]}"{description}, draw=none, from=0, to=1]
    \end{tikzcd}
    \qquad\text{and}\qquad
    \begin{tikzcd}
      {x+x'} && {x+x'} \\
      {x+x'} && {x+x'}
      \arrow[""{name=0, anchor=center, inner sep=0}, "{\mathrm{id}_x + \mathrm{id}_{x'}}", "\shortmid"{marking}, from=1-1, to=1-3]
      \arrow[""{name=1, anchor=center, inner sep=0}, "{\mathrm{id}_{x+x'}}"', "\shortmid"{marking}, from=2-1, to=2-3]
      \arrow[Rightarrow, no head, from=1-1, to=2-1]
      \arrow[Rightarrow, no head, from=1-3, to=2-3]
      \arrow["{[\mathrm{id}_{\iota_x}, \mathrm{id}_{\iota_{x'}}]}"{description}, draw=none, from=0, to=1]
    \end{tikzcd}
  \end{equation*}
  given by the universal property of binary coproducts, as well as the
  comparison cells
  $0_{\dbl{D}_1} \xrightarrow{!} 0_{\dbl{D}_1} \odot 0_{\dbl{D}_1}$ and
  $0_{\dbl{D}_1} \xrightarrow{!} \id_{0_{\dbl{D}_0}}$ given by the universal
  property of initial objects, are all isomorphisms in $\dbl{D}_1$.
\end{definition}

An equipment, also known as a fibrant double category or a framed bicategory, is
a double category in which proarrows can be restricted or extended along pairs
of arrows in a universal way. Equipments can be defined in at least three
equivalent ways \cite[Theorem 4.1]{shulman2008}, including as follows.

\begin{definition} \label{def:equipment}
  An \define{equipment} is a double category $\dbl{D}$ such that the pairing of
  the source and target functors,
  $\langle s,t \rangle: \dbl{D}_1 \to \dbl{D}_0 \times \dbl{D}_0$, is a
  fibration.
\end{definition}

Elaborating the definition, a double category $\dbl{D}$ is an equipment if every
niche in $\dbl{D}$ of the form on the left can be completed to a cell as on the
right
\begin{equation*}
  \begin{tikzcd}
    x & y \\
    w & z
    \arrow["f"', from=1-1, to=2-1]
    \arrow["g", from=1-2, to=2-2]
    \arrow["n"', "\shortmid"{marking}, from=2-1, to=2-2]
  \end{tikzcd}
  \qquad\leadsto\qquad
  \begin{tikzcd}
    x & y \\
    w & z
    \arrow["f"', from=1-1, to=2-1]
    \arrow["g", from=1-2, to=2-2]
    \arrow[""{name=0, anchor=center, inner sep=0}, "n"', "\shortmid"{marking}, from=2-1, to=2-2]
    \arrow[""{name=1, anchor=center, inner sep=0}, "{\mathrm{res}^f_g(n)}", "\shortmid"{marking}, from=1-1, to=1-2]
    \arrow["{\mathrm{res}}"{description}, draw=none, from=1, to=0]
  \end{tikzcd}
\end{equation*}
called a \define{restriction} cell, with the universal property that for every
pair of arrows ${h: x' \to x}$ and ${k: y' \to y}$, each cell $\alpha$ of the
form on the left factors uniquely through the restriction cell as on the
right:
\begin{equation*}
  \begin{tikzcd}
    {x'} & {y'} \\
    x & y \\
    w & z
    \arrow[""{name=0, anchor=center, inner sep=0}, "n"', "\shortmid"{marking}, from=3-1, to=3-2]
    \arrow["f"', from=2-1, to=3-1]
    \arrow["g", from=2-2, to=3-2]
    \arrow["h"', from=1-1, to=2-1]
    \arrow["k", from=1-2, to=2-2]
    \arrow[""{name=1, anchor=center, inner sep=0}, "{m'}", "\shortmid"{marking}, from=1-1, to=1-2]
    \arrow["\alpha"{description}, draw=none, from=1, to=0]
  \end{tikzcd}
  \qquad=\qquad
  \begin{tikzcd}
    {x'} & {y'} \\
    x & y \\
    w & z
    \arrow["f"', from=2-1, to=3-1]
    \arrow["g", from=2-2, to=3-2]
    \arrow[""{name=0, anchor=center, inner sep=0}, "n"', "\shortmid"{marking}, from=3-1, to=3-2]
    \arrow[""{name=1, anchor=center, inner sep=0}, "{\mathrm{res}^f_g(n)}", "\shortmid"{marking}, from=2-1, to=2-2]
    \arrow["h"', from=1-1, to=2-1]
    \arrow["k", from=1-2, to=2-2]
    \arrow[""{name=2, anchor=center, inner sep=0}, "{m'}", "\shortmid"{marking}, from=1-1, to=1-2]
    \arrow["{\mathrm{res}}"{description}, draw=none, from=1, to=0]
    \arrow["{\exists !}"{description, pos=0.4}, draw=none, from=2, to=1]
  \end{tikzcd}
\end{equation*}

Finally, a \define{cocartesian equipment} is a double category that is both
cocartesian and an equipment. We emphasize again that being a cocartesian
equipment is a property of, not a structure on, a double category.

\begin{example}[Cospan double categories] \label{ex:cospan-dbl-cat}
  The prototypical example of a cocartesian equipment is none other than
  $\Csp{\cat{S}}$, the double category of cospans in a category $\cat{S}$ with
  finite colimits. Let us sketch the proof behind this statement. For a more
  detailed proof, one can dualize the proof in Aleiferi's thesis that
  $\Span{\cat{S}}$, for a category $\cat{S}$ with finite limits, is a cartesian
  equipment \cite{aleiferi2018}.

  Finite coproducts in the category $\Csp{\cat{S}}_0 = \cat{S}$ exist by
  assumption, and finite coproducts in the functor category
  $\Csp{\cat{S}}_1 = \cat{S}^{\{\bullet \rightarrow \bullet \leftarrow \bullet\}}$
  are computed pointwise in $\cat{S}$. So the source and target functors
  $\leftfoot, \rightfoot: \Csp{\cat{S}}_1 \to \cat{S}$, extracting the left and
  right feet, preserve coproducts. The comparison cells are isomorphisms because
  colimits commute with colimits (specifically, pushouts commute with
  coproducts) up to canonical isomorphism. Thus, the double category of cospans
  is cocartesian.

  It is also an equipment. To restrict a cospan
  $c \xrightarrow{\ell} y \xleftarrow{r} d$ along a pair of morphisms
  $f: a \to c$ and $g: b \to d$, simply compose the morphisms with the legs of
  the cospan. The restriction cell is trivial:
  \begin{equation*}
    \begin{tikzcd}
      a & y & b \\
      c & y & d
      \arrow["\ell"', from=2-1, to=2-2]
      \arrow["r", from=2-3, to=2-2]
      \arrow[Rightarrow, no head, from=1-2, to=2-2]
      \arrow["f"', from=1-1, to=2-1]
      \arrow["{\ell \circ f}", from=1-1, to=1-2]
      \arrow["{r \circ g}"', from=1-3, to=1-2]
      \arrow["g", from=1-3, to=2-3]
    \end{tikzcd}.
  \end{equation*}
\end{example}

We turn now to maps between cocartesian double categories and equipments. Since
a cocartesian category is a cocartesian object in $\Dbl$, a map between
cocartesian double categories can be defined abstractly as a cocartesian
morphism between cocartesian objects \cite[\S 5.2]{carboni1991}. As before, this
definition reduces to a more concrete one:

\begin{definition} \label{def:cocartesian-dbl-functor}
  A double functor $F: \dbl{D} \to \dbl{E}$ between cocartesian double
  categories is \define{cocartesian} if both underlying functors
  $F_0: \dbl{D}_0 \to \dbl{E}_0$ and $F_1: \dbl{D}_1 \to \dbl{E}_1$ preserve
  finite coproducts.
\end{definition}

Note that we will apply this definition to lax as well as pseudo double
functors.

Perhaps surprisingly, no extra conditions on double functors between equipments
are required. Any (op)lax double functor between equipments automatically
preserves restriction (respectively, extension) cells, as proved by Shulman
\cite[Proposition 6.4]{shulman2008}. In particular, a pseudo double functor
between equipments preserves all the operations afforded by an equipment.

\begin{example}[Maps between cospan double categories]
  \label{ex:maps-cospan-dbl-cat}
  The construction of the double category of cospans $\Csp{\cat{S}}$ extends to
  a 2-functor $\Csp: \Cat_{\mathrm{po}} \to \DblLax$, where $\Cat_{\mathrm{po}}$
  is the 2-category of categories with chosen pushouts, arbitrary functors, and
  natural transformations and $\DblLax$ is the 2-category of double categories,
  lax double functors, and natural transformations \cite[\S C3.11]{grandis2019}.

  Let us describe the lax double functor
  $\Csp{F}: \Csp{\cat{S}} \to \Csp{\cat{S}'}$ induced by a functor
  $F: \cat{S} \to \cat{S}'$ between categories with pushouts. We have
  $\Csp{F}_0 = F$ on objects and arrows, while $\Csp{F}_1$ post-composes with
  $F$ the diagrams defining cospans and maps of cospans in $\cat{S}$. Since
  functors preserve identities, $\Csp{F}$ is a \emph{normal} lax double functor,
  meaning that it preserves identity proarrows strictly. Given cospans
  $m = (a \rightarrow x \leftarrow b)$ and $n = (b \rightarrow y \leftarrow c)$
  in $\cat{S}$, the laxator
  \begin{equation*}
    \Csp{F}_{m,n}: \Csp{F}(m) \odot \Csp{F}(n) \to \Csp{F}(m \odot n)
  \end{equation*}
  has apex map given by the universal property of the pushout in $\cat{S}'$:
  \begin{equation*}
    \begin{tikzcd}[column sep=small]
      & Fb \\
      Fx && Fy \\
      & {Fx +_{Fb} Fy} \\
      & {F(x +_b y)}
      \arrow[from=1-2, to=2-1]
      \arrow[from=1-2, to=2-3]
      \arrow["{\iota_{Fx}}", from=2-1, to=3-2]
      \arrow["{\iota_{Fy}}"', from=2-3, to=3-2]
      \arrow["\lrcorner"{anchor=center, pos=0.125, rotate=135}, draw=none, from=3-2, to=1-2]
      \arrow["{F(\iota_x)}"', curve={height=12pt}, from=2-1, to=4-2]
      \arrow[dashed, from=3-2, to=4-2]
      \arrow["{F(\iota_y)}", curve={height=-12pt}, from=2-3, to=4-2]
    \end{tikzcd}.
  \end{equation*}
  Clearly, $\Csp{F}$ is pseudo if and only if $F$ preserves pushouts.

  Suppose that $\cat{S}$ and $\cat{S}'$ have all finite colimits, so that their
  double categories of cospans are cocartesian. Since coproducts of cospans are
  computed pointwise, $\Csp{F}$ is a cocartesian lax double functor exactly when
  $F$ preserves finite coproducts. Altogether, $\Csp{F}$ is a cocartesian
  (pseudo) double functor if and only if $F$ preserves all finite colimits.
\end{example}

\end{document}